\theoremstyle{plain}
\newtheorem{thm}{Theorem}[section]
\newtheorem{lem}[thm]{Lemma}
\newtheorem{prop}[thm]{Proposition}
\newtheorem{cor}[thm]{Corollary}
\theoremstyle{remark}
\newtheorem*{lem_geometric}{\bf Lemma  \ref{L:geometric}}
\numberwithin{equation}{section}
\def\Hom{\operatorname{Hom}}
\def\Ind{\operatorname{Ind}}
\def\Irr{\operatorname{Irr}}
\def\Isom{\operatorname{Isom}}
\def\GL{\mathrm{GL}}
\def\Mp{\mathrm{Mp}}
\def\CC{\mathbb{C}}
\begin{document}

\title[The Howe duality conjecture]
{A proof of the Howe duality conjecture}
\author{Wee Teck Gan}
\address{Department of Mathematics, National University of Singapore, 10 Lower Kent Ridge Road, Singapore 119076}
\email{matgwt@nus.edu.sg}
\author{Shuichiro Takeda}
\address{Mathematics Department, University of Missouri, Columbia, 202
  Math Sciences Building, Columbia, MO, 65211}
\email{takedas@missouri.edu}

\subjclass[2000]{Primary 11F27, Secondary 22E50}

\date{\today}

\dedicatory{to Professor Roger Howe\\ who started it all \\ on the
  occasion of his 70th birthday}

\begin{abstract}
We give a proof of the Howe duality conjecture in the theory of local theta
correspondence for symplectic-orthogonal or unitary dual pairs
in arbitrary residual characteristic.   
\end{abstract}

\maketitle

\section{\textbf{Introduction}}

Let $F$ be a nonarchimedean local field of characteristic not $2$ and
residue characteristic $p$.  Let $E$ be $F$ itself or a quadratic
field extension of $F$.  For $\epsilon = \pm$, we consider a
$-\epsilon$-Hermitian space $W$ over $E$ of dimension $n$ and an
$\epsilon$-Hermitian space $V$ of dimension $m$.  

\vskip 5pt

Let $G(W)$ and $H(V)$ denote the isometry group of $W$ and $V$
respectively.  Then the group $G(W) \times H(V)$ forms a dual
reductive pair and possesses a Weil representation $\omega_{\psi}$
which depends on a nontrivial additive character $\psi$ of $F$ (and
some other auxiliary data which we shall suppress for now).  To be
precise, when $E=F$ and one of the spaces, say $V$, is odd
dimensional, one needs to consider the metaplectic double cover of
$G(W)$; we shall simply denote this double cover by $G(W)$ as
well. The various cases are tabulated in \cite[\S 3]{gi}.  

\vskip 5pt

In the theory of local theta correspondence, one is interested in the
decomposition of $\omega_{\psi}$ into irreducible representations of
$G(W) \times H(V)$.  More precisely, for any irreducible admissible
representation $\pi$ of $G(W)$, one may consider the maximal
$\pi$-isotypic quotient of $\omega_{\psi}$. This has the form $\pi
\otimes \Theta_{W,V,\psi}(\pi)$ for some smooth representation
$\Theta_{W,V, \psi}(\pi)$ of $H(V)$; we shall frequently suppress $(W,
V,\psi)$ from the notation if there is no cause for confusion.  It was
shown by Kudla \cite{k83} that $\Theta(\pi)$ has finite length
(possibly zero), so we may consider its maximal semisimple quotient
$\theta(\pi)$.  One has the following fundamental conjecture due to
Howe \cite{H1, howe}: 

\vskip 15pt

\noindent\underline{{\bf Howe Duality Conjecture for $G(W) \times
H(V)$}} \vskip 5pt
\noindent (i) $\theta(\pi)$ is either $0$ or irreducible.  \vskip 5pt

\noindent (ii) If $\theta(\pi) = \theta(\pi') \ne 0$, then $\pi =
\pi'$.  \vskip 10pt
 
 \noindent A concise reformulation is: for any irreducible $\pi$ and
 $\pi'$,
\begin{equation}\label{E:HD}
\tag{HD}
\dim \Hom_{H(V)}(\theta(\pi), \theta(\pi'))  \leq \delta_{\pi, \pi'} : =
  \begin{cases} 
  1, \text{  if $\pi \cong \pi'$;} \\
 0, \text{  if $\pi \ncong \pi'$.} \end{cases}
 \end{equation}
 
  \vskip 5pt
  
  We take note of the following theorem: \vskip 5pt

\begin{thm}\label{T:kudla-walds}
\noindent (i) If $\pi$ is supercuspidal, then $\Theta(\pi)$ is either
zero or irreducible (and thus is equal to $\theta(\pi)$).  Moreover,
for any irreducible supercuspidal $\pi$ and $\pi'$,
\[ \Theta(\pi) \cong \Theta(\pi')\ne 0 \Longrightarrow \pi \cong \pi'.
\]

\vskip 5pt

\noindent (ii) $\theta(\pi)$ is multiplicity-free.

\vskip 5pt
\noindent (iii) If $p \ne 2$, the Howe duality conjecture holds.
\end{thm}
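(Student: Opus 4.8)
All three assertions predate the present work, so the plan is to assemble Theorem \ref{T:kudla-walds} by recalling them, indicating the shape of each argument. Part (i) is Kudla's theorem on the theta lift of a supercuspidal representation, part (iii) is Waldspurger's proof of the Howe duality conjecture in residual characteristic $\ne 2$, and part (ii) is extracted from the doubling method of Piatetski-Shapiro and Rallis and holds uniformly in $p$.

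For (i) the engine is Kudla's filtration of the Jacquet module $R_P(\omega_\psi)$ of $\omega_\psi$ along a maximal parabolic $P$ of $H(V)$: the filtration is finite, its extreme term is (essentially) a parabolic induction of a Weil representation $\omega_\psi'$ attached to a dual pair with a smaller Hermitian space, and every other term is, as a $G(W)$-module, built from Jacquet modules $R_Q(\omega_\psi)$ along \emph{proper} parabolics $Q$ of $G(W)$. Since $\pi$ is supercuspidal, $\Hom_{G(W)}(R_Q(\omega_\psi),\pi)=0$ for every proper $Q$, so the entire $\pi$-isotypic quotient of $R_P(\omega_\psi)$ is carried by the single extreme term, which one computes explicitly; feeding this into the universal property of $\Theta(\pi)$ shows that $\Theta(\pi)$ has a unique irreducible quotient $\theta(\pi)$. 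Running the same computation for $\pi^\vee$, together with the identifications $\omega_\psi^\vee\cong\omega_{\psi^{-1}}$ and $\Theta_{\psi^{-1}}(\pi^\vee)\cong\Theta_\psi(\pi)^\vee$ (M\oe glin--Vign\'eras--Waldspurger), realizes $\Theta(\pi)$ also as a \emph{submodule} of $\omega_\psi$ with unique irreducible submodule again $\theta(\pi)$, and a short argument with these two structures forces $\Theta(\pi)=\theta(\pi)$, so $\Theta(\pi)$ is irreducible whenever it is nonzero. For the injectivity among supercuspidals, assume $\Theta_{W,V}(\pi)\cong\Theta_{W,V}(\pi')\ne 0$; through the see-saw pair obtained by doubling $W$ to $W^\square := W\oplus(-W)$ this nonvanishing produces a nonzero $G(W)\times G(W)$-map from (the restriction to $G(W)\times G(W)$ of) the big theta lift $\Theta_{V,W^\square}(\mathbf{1})$ of the trivial representation of $H(V)$ onto $\pi\boxtimes(\pi')^\vee$. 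Since $\Theta_{V,W^\square}(\mathbf{1})$ is a quotient of a degenerate principal series of $G(W^\square)$ whose Bruhat filtration, restricted to $G(W)\times G(W)$, consists of one big-cell piece supported on the diagonal $G(W)$ and further pieces induced from proper parabolic subgroups, supercuspidality of $\pi$ and $\pi'$ discards the latter pieces, and the big-cell piece can map onto $\pi\boxtimes(\pi')^\vee$ only when $\pi\cong\pi'$.

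For (ii) it suffices to prove the uniform multiplicity bound
\[
\dim\Hom_{G(W)\times H(V)}(\omega_\psi,\,\pi\boxtimes\tau)\le 1
\]
for every pair of irreducibles $\pi,\tau$, since $\theta(\pi)$ is semisimple and its multiplicity-freeness is precisely this bound with $\tau$ ranging over the constituents of $\theta(\pi)$. The bound comes from the squaring device of the doubling method: the tensor square of the Hom-space above embeds into the analogous space for the see-saw partner obtained by doubling $V$ (doubled space $V^\square=V\oplus(-V)$), and that space is at most one-dimensional because the relevant big theta lift is cut out of a degenerate principal series of the doubled group whose invariant functional is exhibited, unique up to scalar, by the local doubling zeta integral — even at the Siegel--Weil point of reducibility. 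A non-negative integer $d$ with $d^2\le d$ satisfies $d\le 1$, which is the bound, and nothing here depends on $p$.

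The step that would demand genuine work, were it not being quoted, is (i): the collapse of Kudla's filtration supplies a unique irreducible quotient cheaply, but upgrading this to irreducibility of $\Theta(\pi)$ rests on the sub-and-quotient symmetry together with a careful matching of socle and cosocle (and, in some treatments, the multiplicity-one input of (ii)), and pushing the bookkeeping uniformly through the symplectic--orthogonal and unitary cases — including the even/odd-dimensional dichotomy and the metaplectic cover — is where the care lies. For (iii) we invoke Waldspurger's theorem directly; its own proof is an intricate induction on $\dim V$ resting on Kudla's filtration, and it is precisely that argument which the main body of this paper reworks so as to cover $p=2$ as well.
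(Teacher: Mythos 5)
The paper offers no proof of this theorem: it is quoted as known, with (i) attributed to Kudla \cite{k83} (see also \cite{mvw}), (ii) to Li--Sun--Tian \cite{lst}, and (iii) to Waldspurger \cite{w90}, and the paper is explicit that the three proofs use quite disjoint techniques. Your sketch of (i) matches Kudla's argument (Jacquet modules of $\omega_\psi$ via his filtration, plus the doubling see-saw for the injectivity statement) and is fine as a citation-with-outline. The problems are with (ii) and (iii).

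For (ii), the ``squaring device'' you describe does not close, and the gap is precisely the central difficulty of the whole subject. Squaring $\dim\Hom_{G(W)\times H(V)}(\omega_\psi,\pi\boxtimes\tau)$ via the doubled space reduces the bound to showing that the space of $G(W)\times G(W)$-maps from the degenerate principal series $I(s)$ (at the relevant point $s$) to $\pi\boxtimes\pi^{\vee}\chi_V$ is at most one-dimensional. By the Kudla--Rallis filtration (Lemma \ref{L:key}), this Hom space is filtered with graded pieces $\Hom(R_t(s),\pi\otimes\pi^{\vee}\chi_V)$: the big-cell piece $R_0$ contributes exactly one dimension, but the boundary pieces $R_t$, $t>0$, can contribute additional dimensions whenever $\pi$ occurs on the boundary of $I(s)$. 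Your assertion that the invariant functional is ``unique up to scalar\dots even at the Siegel--Weil point of reducibility'' is exactly what fails there; it is the ``almost'' in Howe's Corvallis quote, and removing it is what Sections \ref{S:non_boundary_case}--\ref{S:assem} of this paper are for. The actual proof of (ii) by Li--Sun--Tian is by an entirely different method --- a Gelfand--Kazhdan-type argument on the (non-)existence of equivariant distributions, using the MVW involution --- and is not a consequence of doubling. (This is also why (ii) remains open for quaternionic dual pairs, where the MVW involution does not exist; a doubling proof would have no such limitation.) Finally, your description of (iii) inverts the history: Waldspurger's proof is not an induction on Kudla's filtration but a $K$-type analysis using lattice models of the Weil representation, which is where the hypothesis $p\ne 2$ enters; the filtration/see-saw strategy is the \emph{other}, independent line of attack, the one this paper completes for all $p$.
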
 

The statement (i) is a classic theorem of Kudla \cite{k83} (see also \cite{mvw}),
whereas (iii) is a well-known result of Waldspurger \cite{w90}. The
statement (ii), on the other hand, is a result of Li-Sun-Tian
\cite{lst}.  We note that the techniques for proving the three
statements in the theorem are quite disjoint from each other. For
example, the proof of (i) is based on arguments using the doubling
see-saw and Jacquet modules of the Weil representation: these have
become standard tools in the study of the local theta correspondence. The
proof of (iii) is based on $K$-type analysis and uses various lattice
models of the Weil representation. Finally, the proof of (ii) is based
on an argument using the Gelfand-Kazhdan criterion for the
(non-)existence of equivariant distributions.  

\vskip 5pt

In this paper, we shall  not assume any of the statements in Theorem
\ref{T:kudla-walds}. Indeed, the purpose of this paper is to give a
simple proof of  the Howe duality conjecture, following a strategy initiated by Howe in his Corvallis article \cite[\S 11]{H1} and using essentially the
same tools in the proof of Theorem \ref{T:kudla-walds}(i) \cite{k83}, as developed further in \cite{mvw}.   Thus, our main theorem is:
\vskip 5pt

\begin{thm}  \label{T:main}
The Howe duality conjecture \eqref{E:HD} holds for the pair $G(W) \times H(V)$.
\end{thm}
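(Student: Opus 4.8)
The plan is to follow the strategy initiated by Howe in his Corvallis article, combined with the doubling see-saw and Jacquet-module techniques developed by Kudla and refined in \cite{mvw}. The key reduction is the following: one first handles the case where one of the representations, say $\pi$, is supercuspidal, and then bootstraps to the general case by an induction on the dimension of the spaces (equivalently, using parabolic induction and the compatibility of theta correspondence with Jacquet functors).

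\textbf{Step 1: The supercuspidal case.} I would first prove that if $\pi$ is supercuspidal, then $\Theta(\pi)$ is either zero or irreducible, and moreover that $\Theta(\pi) \cong \Theta(\pi') \neq 0$ forces $\pi \cong \pi'$ (this is Theorem \ref{T:kudla-walds}(i), but since we are not allowed to assume it, it must be reproved). The standard tool here is the doubling see-saw pair: consider the see-saw diagram relating $G(W) \times G(W) \subset G(W \oplus (-W))$ on one side and the diagonal $H(V) \subset H(V) \times H(V)$ on the other. Using the fact that $\pi$ is supercuspidal, the key point is that any nonzero $G(W)$-equivariant map between $\Theta$-lifts extends to a map at the level of Weil representations, and supercuspidality kills all the Jacquet-module contributions in the filtration of the Weil representation (the Kudla filtration), leaving only the "bottom piece" which is controlled by the doubling zeta integral / the duality for $\GL_n$. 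This is the technical heart where one uses the explicit structure of the Jacquet modules of $\omega_\psi$ along maximal parabolics.

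\textbf{Step 2: Reduction of the general case via Jacquet modules.} For general irreducible $\pi$, write $\pi$ as a quotient of a representation parabolically induced from a supercuspidal $\tau \otimes \pi_0$ on a Levi $M = \GL \times G(W_0)$ with $W_0$ of smaller dimension. The compatibility of the theta correspondence with Jacquet functors (Kudla's computation of $R_P(\omega_\psi)$, where $R_P$ is the normalized Jacquet functor along a parabolic $P$) gives a filtration of $R_P(\Theta(\pi))$ whose pieces involve $\Theta(\pi_0)$ for the smaller pair together with twists by $\tau$. Feeding this into $\Hom_{H(V)}(\theta(\pi), \theta(\pi'))$ via Frobenius reciprocity, one reduces the inequality \eqref{E:HD} for $(\pi, \pi')$ to the same inequality (or the supercuspidal case) for pairs of spaces of strictly smaller dimension, closing the induction. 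One must be careful that $\theta(\pi)$ is a quotient of $\Theta(\pi)$ and that passing to maximal semisimple quotients is compatible with the Hom computations — this is where multiplicity-freeness of $\theta(\pi)$ enters and also needs to be established along the way rather than assumed.

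\textbf{The main obstacle.} I expect the hardest part to be Step 2, specifically controlling the interaction between the Jacquet-functor filtration of $\Theta(\pi)$ and the passage to the irreducible (maximal semisimple) quotient $\theta(\pi)$: a priori $\theta$ does not commute with Jacquet functors, and one needs a clean "going-up/going-down" argument — in the spirit of Howe's $\S 11$ and Kudla's persistence principle — to ensure that the bound obtained at the level of $\Theta$ for smaller pairs descends correctly to $\theta$ for the original pair. A secondary subtlety is the "boundary" cases in the dimension induction (when the smaller pair is in the stable or almost-stable range, or when first occurrence phenomena force $\Theta(\pi_0) = 0$), which must be treated by hand; and throughout one must track the metaplectic cover and the various sign and parity conventions recorded in \cite[\S 3]{gi}. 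Once these are handled, assembling Theorem \ref{T:main} is formal.
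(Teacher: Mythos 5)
There is a genuine gap in Step 2, and it is precisely the gap that kept the conjecture open: the reduction you propose — write $\pi$ as a quotient (or submodule) of an induction from an arbitrary supercuspidal datum $\tau \otimes \pi_0$ and push the inequality \eqref{E:HD} down to the smaller pair via Kudla's filtration — does not close. Two things go wrong. First, for a general supercuspidal support the induced representation $\tau \rtimes \pi_0$ has no uniqueness of irreducible submodule, so even if you control $\theta$ for the smaller pair you cannot recover $\pi$ (resp.\ $\theta(\pi)$) uniquely from $\pi_0$ (resp.\ from $\theta(\pi_0)$); the "going-down" step loses information. Second, the non-bottom pieces $J^k$, $k<a$, of Kudla's filtration (Lemma \ref{L:kudla}) contribute to the relevant Hom spaces for a generic choice of $\tau$; they can only be killed when the inducing data sits at very specific exponents. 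If the naive supercuspidal-support induction worked, Kudla's 1986 results would already have implied the full conjecture.

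The paper's actual argument is organized around a different dichotomy, not supercuspidal versus non-supercuspidal but \emph{boundary} versus \emph{non-boundary}: whether $\pi$ admits an embedding $\pi \hookrightarrow \chi_V |{\det}_{\GL(X_t)}|^{s_{m,n}-\frac{t}{2}} \rtimes \sigma$ for some $t>0$. In the non-boundary case (which includes all supercuspidals, so your Step 1 is subsumed here) the doubling see-saw together with the Kudla--Rallis filtration of $I(-s_{m,n})$ (Lemma \ref{L:key}) gives \eqref{E:HD} directly, with no induction at all (Theorem \ref{T:non_boundary}). In the boundary case the missing idea — due to Minguez — is to take $a$ \emph{maximal} with $\pi \hookrightarrow \chi_V|-|^{s_{m,n,t}} \cdot 1^{\times a} \rtimes \sigma$; maximality, combined with $^c\rho^{\vee} \ne \rho$ for $\rho = \chi_V|-|^{s_{m,n,t}}$ (which holds because $s_{m,n,t}<0$), forces via the Geometric Lemma (Lemma \ref{L:geometric}) that this induced representation has a unique irreducible submodule and that $\rho^{\times a}\otimes\sigma$ occurs in its Jacquet module with multiplicity one. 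This is exactly the mechanism that lets one pass between $\Theta$ for the big pair and $\Theta$ for the smaller pair without ambiguity (Propositions \ref{P:unique1} and \ref{P:unique2}), and it is what your "clean going-up/going-down argument" would need to be. The strict inequality $\lambda_{a-k} > s_{m,n,t}$, together with genericity of $1^{\times a}$, is what kills the non-bottom pieces $J^k$ of the filtration. Without supplying this maximality device (or an equivalent), your Step 2 remains a restatement of the difficulty rather than a proof.
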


\vskip 5pt
Let us make  a few remarks:
\vskip 5pt

\begin{enumerate}[(1)]
\item From our brief sketch of the history of the Howe duality
  conjecture above, the reader can discern two distinct lines of
  attack on the Howe duality conjecture, the genesis of which can both
  be found in \cite{H1}. The first is a study of K-types, which was
  used to establish the conjecture in the archimedean cases and
  adapted to the p-adic case (with $p \ne 2$) by Howe and Waldspurger.
  The second is a doubling see-saw argument  outlined  in \cite[\S
  11]{H1}. Quoting Howe \cite[Pg. 284]{H1}:

\vskip 5pt

``{\em From this doubling construction, we see that the space $(Y \otimes Y^{\vee})_1$ defined above
is described as a $2 \tilde{G}'$-module by Theorem 9.2. By restriction we can investigate its
structure as a $\tilde{G}' \times \tilde{G}'$-module. Doing so we find that the duality conjecture is certainly
true if $G$ or $G'$ is compact, and in general is ``almost'' true. It remains in the
general case to remove the ``almost".} ''
\vskip 5pt

\noindent It has taken almost 40 years to remove the ``almost".

\vskip 10pt

 \item  The above setup makes sense even when $E = F \times F$ is a split
  quadratic algebra, in which case  the groups $G(W)$ and $H(V)$ are
  general linear groups.
   In that case, the Howe duality conjecture has been shown by Minguez in \cite{mi}.
As we shall see, the proof of Theorem \ref{T:main} is essentially
analogous to the one given by Minguez. 

\vskip 5pt
 
 \item In an earlier paper \cite{gt}, we had extended Theorem
   \ref{T:kudla-walds}(i) (with $\Theta(\pi)$ replaced by
   $\theta(\pi)$) from supercuspidal to tempered
   representations. Using this, we had shown the Howe duality
   conjecture for almost equal rank dual pairs. The argument in
   Section \ref{S:non_boundary_case} of this paper
   (doubling see-saw) is the same as that in \cite[\S 2]{gt}, but
   pushed to the limit beyond tempered representations. On the other
   hand, the argument in Section \ref{S:boundary_case} (Kudla's filtration) is entirely
   different from that in \cite[\S 3]{gt} and uses a key technique  of
   Minguez \cite{mi}.  
 
  \item We remark that in the papers \cite{M1,M2,M3, M4},  Mui\'c has
    conducted detailed studies of the local theta correspondence for
    symplectic-orthogonal dual pairs. In \cite{M1}, for example, he
    explicitly determined the theta lift of discrete series
    representations $\pi$ in terms of the Moeglin-Tadi\'c
    classification and observed as a consequence the irreducibility of
    $\theta(\pi)$. The Moeglin-Tadi\'c classification was conditional
    at that point, and we are not sure where it stands today.  In
    \cite{M3, M4}, Mui\'c proved various general properties of the
    theta lifting of tempered representations (such as the issue of
    whether $\Theta(\pi) = \theta(\pi)$), and  obtained very explicit
    information about the theta lifting under the assumption of the
    Howe duality conjecture.  The main tools he used are Jacquet
    modules analysis and Kudla's filtration.   Since the Howe duality
    conjecture is a simple statement without reference to
    classification, it seems desirable to have a classification-free
    proof. Indeed, our result renders most results in \cite{M3,M4}
    unconditional.
  
\end{enumerate}
 \vskip 10pt

  As is well-known, there is another family
of dual pairs associated to quaternionic Hermitian and skew-Hermitian
spaces. (See \cite{w90} or \cite{k94} for more details.) Our proof, unfortunately, does not apply to these
 quaternionic dual pairs, because we have made use of the
 MVW-involution $\pi \mapsto \pi^{MVW}$ on the category of smooth
 representations of $G(W)$ and $H(V)$. For the same reason, the result
 of \cite{lst} in Theorem \ref{T:kudla-walds}(ii)  is not known for
 these quaternionic dual pairs. Unlike the contragredient functor,
 which is contravariant in nature, the MVW-involution is covariant and
 has  the property that $\pi^{MVW}  = \pi^{\vee}$ if $\pi$ is
 irreducible.   It was shown in \cite{sun} that such an involution
 does not exist for quaternionic unitary groups.  
  \vskip 5pt
  
  Nonetheless, even in the quaternionic case, our proof gives a partial result
  which is often sufficient for global applications.  Namely,  if
  $\pi$ is an irreducible Hermitian representation (i.e.\
  $\overline{\pi} = \pi^{\vee}$) and we let
  $\theta_{her}(\pi) \subset \theta(\pi)$ denote the submodule
  generated by irreducible Hermitian summands, then the results of
  Theorem \ref{T:main} hold for Hermitian $\pi$'s and with
  $\theta(\pi)$ replaced by $\theta_{her}(\pi)$.   Namely we have:
  \vskip 5pt
  
\begin{thm}\label{T:main_quaternion}
 Consider a quaternionic dual pair $G(W) \times H(V)$ and let $\pi$ and $\pi'$ be irreducible Hermitian
 representations of $G(W)$.
 Let $\theta_{her}(\pi) \subset \theta(\pi)$ be the submodule generated by irreducible Hermitian summands. Then  we have
\[
\dim \Hom_{H(V)}(\theta_{her}(\pi), \theta_{her}(\pi'))  \leq
\delta_{\pi, \pi'}.
 \]
In particular, if $\pi$ and $\pi'$ are unitary, we have
\[
\dim \Hom_{H(V)}(\theta_{unit}(\pi), \theta_{unit}(\pi'))  \leq
\delta_{\pi, \pi'},
 \]
where $\theta_{unit}(\pi) \subset \theta_{her}(\pi)$ consists of
irreducible unitary summands of $\theta(\pi)$. 
\end{thm}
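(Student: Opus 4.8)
The plan is to observe that the MVW-involution enters the proof of Theorem~\ref{T:main} only through its formal properties as a covariant exact endofunctor on the category of smooth representations which sends an irreducible representation to its contragredient and which is compatible with the Weil representation; for Hermitian representations one can use complex conjugation in its place. Recall that for a smooth representation $\pi$ on a vector space $V_\pi$, the conjugate $\overline{\pi}$ is the representation of the same group on the complex-conjugate space $\overline{V_\pi}$ with the same action of group elements. The assignment $\pi \mapsto \overline{\pi}$ is a covariant exact functor which commutes with parabolic induction, with the Jacquet functors, and with the formation of $\Theta(\pi)$ and $\theta(\pi)$, at the cost of replacing the additive character $\psi$ by $\overline{\psi} = \psi^{-1}$; this last point uses only that $\overline{\omega_\psi} \cong \omega_{\psi^{-1}}$, which holds for the quaternionic dual pairs as well. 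Moreover $\overline{\pi} \cong \pi^\vee$ whenever $\pi$ is irreducible and Hermitian, and in particular whenever $\pi$ is irreducible and unitary, an invariant inner product providing the required conjugate-linear $G(W)$-isomorphism $V_\pi \to V_{\pi^\vee}$. Since we are establishing the inequality \eqref{E:HD} for all additive characters simultaneously, the interchange $\psi \leftrightarrow \psi^{-1}$ in intermediate steps is harmless.

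Granting this, the plan is to repeat the proof of Theorem~\ref{T:main} essentially verbatim, keeping track of one extra piece of bookkeeping. One first runs the doubling see-saw argument of Section~\ref{S:non_boundary_case}: the bound on $\dim\Hom_{H(V)}(\theta_{her}(\pi),\theta_{her}(\pi'))$ is reduced, exactly as there, to statements about theta lifts for the doubled dual pair, and at the point where the original argument invokes $(\pi')^{MVW}\cong(\pi')^\vee$ to exhibit $\theta(\pi')^\vee$ (or a constituent thereof) as a theta lift, one instead invokes $\overline{\pi'}\cong(\pi')^\vee$ — legitimate because $\pi'$ is Hermitian — so that the relevant object is the $\psi^{-1}$-theta lift of $\pi'$. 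One then runs the boundary-case analysis of Section~\ref{S:boundary_case} via Kudla's filtration and the associated Jacquet-module computations; here conjugation must be threaded through the inductive step, which is again harmless since it commutes with the Jacquet functors, parabolic induction and theta lifting (up to $\psi\leftrightarrow\psi^{-1}$). The conclusion of these two steps is the same dichotomy as in Theorem~\ref{T:main}, now for $\theta_{her}$: the module $\theta_{her}(\pi)$ is zero or irreducible, and $\theta_{her}(\pi)\cong\theta_{her}(\pi')\neq 0$ forces $\pi\cong\pi'$; together these give the asserted inequality.

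The point requiring care — and the reason one must pass to $\theta_{her}(\pi)$ rather than $\theta(\pi)$ — is that, without the MVW-involution, one cannot conclude that $\theta(\pi)$ itself is Hermitian, so conjugation can only be used to control the \emph{Hermitian} constituents of the various theta lifts and Jacquet modules that occur. This is consistent: if $\sigma$ is an irreducible Hermitian constituent of $\theta_\psi(\pi)$, then $\sigma^\vee\cong\overline{\sigma}$ is an irreducible Hermitian constituent of $\overline{\theta_\psi(\pi)}\cong\theta_{\psi^{-1}}(\pi^\vee)$, and its theta lifts to smaller groups and its Jacquet modules again decompose into Hermitian constituents by the compatibilities listed above; so every containment and vanishing statement used in the proof of Theorem~\ref{T:main} survives restriction to Hermitian summands. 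Thus the only genuine obstacle is organizational — checking that the argument of Theorem~\ref{T:main} never needs a non-Hermitian constituent — and I expect this to be routine. Finally, the unitary case is immediate: $\theta_{unit}(\pi)$ is a direct summand of the semisimple module $\theta_{her}(\pi)$, so restriction and extension by zero give an injection
\[
\Hom_{H(V)}(\theta_{unit}(\pi),\theta_{unit}(\pi')) \hookrightarrow \Hom_{H(V)}(\theta_{her}(\pi),\theta_{her}(\pi')),
\]
and the bound for $\theta_{her}$ yields the bound for $\theta_{unit}$.
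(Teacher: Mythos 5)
Your overall strategy is exactly the one the paper follows: discard the MVW involution, use complex conjugation $\pi \mapsto \overline{\pi}$ in its place, exploit $\overline{\pi} \cong \pi^{\vee}$ for Hermitian $\pi$ (this is the definition), and rerun the see-saw and boundary arguments while restricting attention to Hermitian constituents. Your reduction of the unitary case to the Hermitian case is also correct. However, the step you dismiss as ``organizational'' and ``routine'' --- checking that the argument of Theorem \ref{T:main} never needs a non-Hermitian constituent --- is precisely the one place where a genuine argument is required, and you have not supplied it. Concretely: in the boundary case one writes $\pi \hookrightarrow \chi_V |-|^{s_{m,n,t}} \cdot 1^{\times a} \rtimes \sigma$ with $a$ maximal, and the induction hypothesis in the quaternionic setting is available \emph{only} for Hermitian representations of the smaller groups (it asserts a bound for $\theta_{her}$, not for $\theta$). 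So you must prove that $\sigma$ is Hermitian whenever $\pi$ is, and likewise that the $\Sigma$ appearing in the analogue of Proposition \ref{P:unique2} is Hermitian. This is the content of Lemma \ref{L:her}, and it is not formal: it is proved by dualizing and conjugating the embedding to get a surjection $(\overline{\rho}^{\vee})^{\times a} \rtimes \overline{\sigma}^{\vee} \twoheadrightarrow \pi$, using $\overline{\rho} = \rho$ for the real-valued character $\rho = \chi_V|-|^{s_{m,n,t}}$, applying Bernstein's reciprocity to land $(\rho^{\vee})^{\times a} \otimes \overline{\sigma}^{\vee}$ inside $R_{\overline{Q}(X_{a})}(\pi)$, and then invoking the multiplicity-one statement of (the quaternionic analogue of) Lemma \ref{L:geometric}(i) to force $\overline{\sigma}^{\vee} \cong \sigma$. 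Without this, your induction does not close.

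Two smaller points. First, your bookkeeping via $\psi \leftrightarrow \psi^{-1}$ is more complicated than necessary: the see-saw identity of \cite[\S 6.1]{gi} already produces $\overline{\Theta(\pi)}$ directly (the MVW form used in Section \ref{S:non_boundary_case} is derived from it), and in the boundary analysis the conjugation is only ever applied to Hermitian objects, where $\overline{(\cdot)}^{\vee}$ is the identity on isomorphism classes, so no change of additive character needs to be tracked. Second, the exponents in Lemmas \ref{L:kudla} and \ref{L:key} change in the quaternionic case (e.g.\ $s + \frac{t}{2}$ becomes $s+t$, and $\lambda_{a-k}$ becomes $s_{m,n}+a-k$, with $\det$ the reduced norm on $\GL_t(B)$); your ``verbatim'' transfer should at least acknowledge these normalizations, though they do not affect the structure of the argument.
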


We give a proof of this theorem in the last section of the paper.

\vskip 15pt
\begin{center}{\bf Acknowledgements}\end{center}
This project was begun  during the authors' participation in the Oberwolfach
workshop ``Modular Forms" in April  2014 and completed while both
authors were participating in the workshop ``The Gan-Gross-Prasad
conjecture" at Jussieu in June-July 2014.  
We thank the Ecole Normale Superieur and the Institut des Hautes
\'Etudes Scientifiques for hosting our respective visits. 
We also thank Goran Mui\'c and Marcela Hanzer for several useful
conversations and email exchanges about \cite{M1, M2,M3,M4} and the
geometric lemma respectively.
We are extremely grateful to Alberto Minguez for explaining to us the
key idea in his paper \cite{mi} for taking care of the representations
on the boundary.
Finally, we thank an anonymous referee who pointed out several
embarrassing errors in a first version of this paper. 
\vskip 5pt

The first author is partially supported by an MOE Tier 1 Grant
R-146-000-155-112 and an MOE Tier Two grant R-146-000-175-112, whereas the
second author is partially supported by NSF grant DMS-1215419.

\vskip 15pt

\section{\bf Basic Notations and Conventions}

\subsection{\bf Fields.}
Throughout the paper, $F$ denotes a nonarchimedean local field of
characteristic different from 2 and residue characteristic $p$. Once
and for all, we fix
a non-trivial additive character $\psi$ on $F$.  Let $E$ be $F$ itself or a quadratic field extension of $F$.
For $\epsilon=\pm 1$, we set
\[ 
\epsilon_0 = \begin{cases} \epsilon \text{ if $E=F$;} \\ 0 \text{
if $E \ne F$.}  \end{cases} 
\] 

\vskip 5pt

\subsection{\bf Spaces.} Let
\begin{align*}
W&=W_n=\text{a $-\epsilon$-Hermitian space over $E$ of dimension $n$ over $E$}\\
V&=V_m=\text{an $\epsilon$-Hermitian space over $E$ of dimension $m$ over $E$}.
\end{align*}
We also set:
\[  s_{m,n}  = \frac{ m - (n+\epsilon_0)}{2}.\]

 \vskip 5pt
 
 \subsection{\bf Groups.}
We will consider the isometry groups associated to the pair $(V,W)$ of
$\pm \epsilon$-Hermitian spaces. More precisely, we set:
\[
G(W)=\begin{cases}\text{the metaplectic group ${\Mp}(W)$, if
    $W$ is symplectic and  $\dim V$ is odd;}\\
\text{the isometry group of $W$, otherwise.}
\end{cases}
\]
We define $H(V)$ similarly by switching the roles of $W$ and
$V$. Occasionally we write
\begin{align*}
G_n&:=G(W_n)\\
H_m&:=H(V_m).
\end{align*}

For the general linear group, we shall write $\GL_n$ for the
group $\GL_n(E)$. Also for a vector space
$X$ over $E$, we write $\det_{\GL(X)}$ or sometimes simply $\det_X$
for the determinant on $\GL(X)$.
 \vskip 5pt
 
 \subsection{\bf Representations.}
 For a $p$-adic group $G$, let ${\rm Rep}(G)$ denote the category of smooth representations of $G$ and  denote
by $\Irr(G)$ the set of equivalence classes of irreducible smooth representations of $G$. 
\vskip 5pt

For a parabolic $P = MN$ of $G$, we have the normalized induction functor
\[  {\rm Ind}_P^G  :  {\rm Rep}(M) \longrightarrow {\rm Rep}(G). \]
On the other hand, we have the normalized Jacquet functor
\[  R_P :  {\rm Rep}(G)  \longrightarrow {\rm Rep}(M).  \]
 If $\overline{P} = M \overline{N}$ denotes the opposite parabolic subgroup to $P$, we likewise have the functor  $R_{\overline{P}}$. 
 We shall frequently exploit the following two Frobenius' reciprocity formulas:
 \[  \Hom_G ( \pi ,  {\rm Ind}_P^G  \sigma)  \cong \Hom_M(R_P(\pi), \sigma)  \quad \text{(standard Frobenius reciprocity)}  \]
  and
  \[  \Hom_G({\rm Ind}_P^G \sigma,  \pi) \cong \Hom_M(\sigma,  R_{\overline{P}}(\pi)) \quad \text{(Bernstein's Frobenius reciprocity)}. \]
Moreover, Bernstein's Frobenius reciprocity is equivalent to the statement:
\[   R_P(\pi^{\vee})^{\vee}  \cong R_{\overline{P}}(\pi) \]
for any smooth representation $\pi$ with contragredient $\pi^{\vee}$, where $\overline{P}$ denotes the opposite parabolic  subgroup to $P$. 
\vskip 5pt

\subsection{\bf Parabolic induction.}
When $G$ is a classical group, we shall
use Tadi\'{c}'s notation for induced representations. Namely, 
 for general linear groups, we set
\[
\rho_1\times\cdots\times\rho_a:=
\Ind_Q^{\GL_{n_1+\dots+n_a}}\rho_1\otimes\cdots\otimes\rho_a
\]
where $Q$ is the standard parabolic subgroup with Levi subgroup
$\GL_{n_1}\times\cdots\times\GL_{n_a}$.  
For a classical group such as $G(W)$, its parabolic subgroups are given as the stabilizers of flags of isotropic spaces.
 If $X_t$ is a $t$-dimensional isotropic space of $W= W_n$ and we decompose
 \[  W  = X_t \oplus W_{n-2t}  \oplus X_t^*, \]
 the corresponding maximal parabolic subgroup
$Q(X_t)= L(X_t) \cdot U(X_t)$  has Levi factor $L(X_t) = \GL(X_t) \times
G(W_{n-2t})$.  If $\rho$ is a representation of $\GL(X_t)$ and
$\sigma$ is a representation of $G(W_{n-2t})$, we write
\[  \rho \rtimes \sigma   =  {\rm Ind}_{Q(X_t)}^{G(W)}  \rho \otimes \sigma. \]
  More generally, a
standard parabolic subgroup $Q$ of $G(W)$  has the Levi factor of the
form $\GL_{n_1}\times\cdots\times\GL_{n_a}\times G(W_{n'})$ and  we set
\[
\rho_1\times\cdots\times\rho_a\rtimes \sigma:=
\Ind_Q^{G(W)}\rho_1\otimes\cdots\otimes\rho_a\otimes \sigma,
\]
where $\rho_i$ is a representation of $\GL_{n_i}$ and $\sigma$ is a
representation of $G(W_{n'})$.   When $G(W) = \Mp(W)$ is a metaplectic
group, we will follow the convention of
\cite[\S 2.2-2.5]{gs} for  normalized parabolic induction.
\vskip 5pt

We have the analogous convention for parabolic subgroups and induced representations of $H(V_m)$.
For example, a maximal parabolic subgroup of $H(V_m)$ has the form  $P(Y_t)=
 M(Y_t) \cdot N(Y_t)$ and is the stabilizer of a $t$-dimensional isotropic subspace $Y_t$ of $V_m$.

\vskip 5pt
To distinguish between representations of $G(W)$ and $H(V)$, 
we will normally use lower
case Greek letters such as $\pi, \sigma$ etc to denote representations of $G(W)$, and upper case Greek letters such as
$\Pi, \Sigma$ etc to denote representations of $H(V)$.
\vskip 5pt

\subsection{\bf Conjugation Action}

Let $X$ be a vector space over $E$ of dimension $n$ and  let $c$ be
the generator of $\operatorname{Gal}(E/F)$. For each
representation $\rho$ of $\GL(X)$, we define the $c$-conjugate
$^c\rho$ of $\rho$ as follows. First, fix a basis of $X$, which
gives an isomorphism $\alpha:\GL(X)\cong\GL_n(E)$. The natural action of $c$
on $\GL_n(E)$ induces an action on $\GL(X)$, which we write as
$c\cdot g:=\alpha^{-1}\circ c\circ\alpha(g)$ for $g\in\GL(X)$. 

Then we define $^c\rho$ by
\[
^c\rho(g):=\rho(c\cdot g)
\]
for $g\in \GL(X)$. Of course, a different choice of the basis gives a
different  $\alpha$ and thus a different automorphism $g \mapsto c \cdot g$ of $\GL(X)$.
But these different automorphisms differ from each other by  inner automorphisms of $\GL(X)$,
and hence the isomorphism class of
$^c\rho$ is independent of the choice of the basis. Of course if $E=F$, then $^c\rho=\rho$. 

\vskip 5pt

\subsection{\bf MVW}
In   \cite[p. 91]{mvw},
Moeglin, Vigneras and Waldspurger introduced a  functor
\[  MVW:  {\rm Rep}(G(W))  \longrightarrow {\rm Rep}(G(W)) \]
which is an involution and satisfies
\[  \pi^{MVW}  = \pi^{\vee}  \quad \text{ if $\pi$ is irreducible.} \]
Unlike the contragredient functor, this  MVW involution  is covariant.
It will be useful to observe that 
\begin{equation}\label{E:MVW}
  (\rho \rtimes \sigma)^{MVW}  = {^c}\rho \rtimes \sigma^{MVW}.
\end{equation}

\vskip 5pt

\subsection{\bf Weil representations.}
To consider the Weil representation of the pair $G(W)\times H(V)$, we
need to specify extra data to give a splitting $G(W) \times
H(V)\rightarrow \Mp(W\otimes V)$ of the dual pair. 
 Such splittings were constructed and parametrized by Kudla  \cite{k94} and we shall use his 
 convention here, as described in \cite[\S 3.2-3.3]{gi}.
 In particular, a splitting is specified by  fixing a
pair of splitting characters $\mathbold{ \chi} = (\chi_V, \chi_W)$, which are
certain unitary characters of $E^{\times}$. Pulling back the Weil representation
of $\Mp(W\otimes V)$ to $G(W)\times H(V)$ via the
splitting, we obtain the  associated Weil
representation $\omega_{W, V, \mathbold{\chi}, \psi}$ of $G(W)\times H(V)$.  
  Note that the character $\chi_V$ satisfies 
\begin{equation}\label{E:chi}
  {^c}\chi_V^{-1}  =  \chi_V
\end{equation}
and likewise for $\chi_W$.  We shall frequently
suppress $\mathbold{\chi}$ and $\psi$ from the notation, and simply
write $\omega_{W, V}$ for the Weil representation.

 \vskip 15 pt

\section{\bf Some Lemmas}
In this section, we record a couple of standard lemmas which will be used in the proof of the main theorem. We also introduce the important notion of ``occurring on the boundary".
\vskip 5pt

\subsection{\bf Kudla's filtration.}  The first lemma describes the computation of the Jacquet modules of the Weil representation.
 This is a well-known result of Kudla \cite{k83}; see  also \cite{mvw}.
\vskip 5pt

\begin{lem} \label{L:kudla}
  The Jacquet module $R_{Q(X_a)}(\omega_{W_n, V_{m}} )$ has
an equivariant filtration
\[ R_{Q(X_a)}(\omega_{W_n,V_{m}}) = R^0 \supset R^1 \supset \cdots \supset R^a
\supset R^{a+1} = 0 \] 
whose successive quotients $J^k = R^k/R^{k+1}$
are described in \cite[Lemma C.2]{gi}. 
More precisely,
\begin{align*}
 J^k &= \Ind^{\GL(X_a) \times G(W_{n-2a}) \times H(V_m)}_{Q(X_{a-k}, X_a)
\times G(W_{n-2a}) \times P(Y_k)}\\
&\qquad\left(\chi_V
|{\det}_{X_{a-k}}|^{\lambda_{a-k}} \otimes
C^{\infty}_c(\Isom_{E,c}(X_k, Y_k)) \otimes
\omega_{W_{n-2a}, V_{m-2k}}\right),
\end{align*}
 where
\begin{itemize}
\item[\textperiodcentered] $\lambda_{a-k} = s_{m,n} + \frac{a-k}{2} $;
\item[\textperiodcentered] $V_m = Y_k + V_{m-2k} +Y_k^*$ with $Y_k$ a $k$-dimensional
isotropic space;
\item[\textperiodcentered] $X_a = X_{a-k} + X_k'$ and $Q(X_{a-k}, X_a)$ is the maximal
parabolic subgroup of $\GL(X_a)$ stabilizing $X_{a-k}$;
\item[\textperiodcentered] $\Isom_{E,c}(X_k, Y_k)$ is the set of $E$-conjugate-linear isomorphisms from $X_k$ to $Y_k$;
\item[\textperiodcentered] $\GL(X_k)\times\GL(Y_k)$ acts on
  $C_c^\infty(\Isom_{E,c}(X_k,Y_k))$ as 
  \[ ((b,c)\cdot f)(g)=\chi_V(\det b)\chi_W(\det
  c)f(c^{-1}g b) \]
   for $(b,c)\in \GL(X_k)\times\GL(Y_k)$, $f\in
  C_c^\infty(\Isom_{E,c}(X_k,Y_k))$ and $g\in\GL_k$.
\item [\textperiodcentered] $J^k=0$ for $k>\min\{a, q\}$, where $q$ is
  the Witt index of $V_m$.
\end{itemize} 
In particular, the bottom piece
of the filtration (if nonzero) is:
\[ J^a \cong \Ind^{\GL(X_a) \times G(W_{n-2a}) \times H(V_m)}_{\GL(X_a)
\times G(W_{n-2a}) \times P(Y_a)} \left(C^{\infty}_c(\Isom_{E,c}(X_a,Y_a)) \otimes \omega_{W_{n-2a},
V_{m-2a}}\right). \]
\end{lem}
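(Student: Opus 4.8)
The plan is to establish this by an explicit computation of the Jacquet module in a mixed Schr\"odinger model of $\omega_{W_n,V_m}$ adapted to the flag defining $Q(X_a)$; this is Kudla's original argument \cite{k83} (see also \cite{mvw}), and the precise form stated here is the computation carried out in \cite[Appendix C]{gi}. Concretely, I would first choose a complete polarization of the symplectic space $W_n\otimes_E V_m$ over $F$ compatible with the decomposition $W_n=X_a\oplus W_{n-2a}\oplus X_a^*$, taking a maximal isotropic subspace of the form $(X_a\otimes_E V_m)\oplus\mathcal Y_0$ with $\mathcal Y_0$ maximal isotropic in $W_{n-2a}\otimes_E V_m$. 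This realizes $\omega_{W_n,V_m}$ on a space of the form $\mathcal S\big(\Hom_E(X_a,V_m)\big)\otimes\omega_{W_{n-2a},V_m}$, the second tensor factor being the Schr\"odinger model of the Weil representation of the smaller pair $G(W_{n-2a})\times H(V_m)$ and, in the Galois case, the homomorphisms in the first factor being conjugate-linear. Using the explicit formulas of \cite[\S 3]{gi} and \cite{k94}, one then writes down the action of the Levi $L(X_a)=\GL(X_a)\times G(W_{n-2a})$ and of the unipotent radical $U(X_a)$ on this model: $U(X_a)$ acts on the first factor through translations in $\Hom_E(X_a,V_m)$ by its abelian subquotient $\cong\Hom_E(W_{n-2a},X_a)$, together with multiplication by characters and the natural action on $\omega_{W_{n-2a},V_m}$.

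The next step is to take normalized $U(X_a)$-coinvariants. Taking coinvariants for the translation part, suitably twisted by the character coming from the $W_{n-2a}\otimes V_m$ variables, forces the result to be supported on the closed subset $Z\subset\Hom_E(X_a,V_m)$ of those $\varphi$ whose image is a totally isotropic subspace of $V_m$; such a $\varphi$ factors as $X_a\twoheadrightarrow X_a/\ker\varphi\xrightarrow{\sim}Y_k\hookrightarrow V_m$ with $Y_k$ isotropic and the middle arrow a conjugate-linear isomorphism. Stratifying $Z$ by the rank $k$ of $\varphi$ (the dimension of its image) and letting $R^k$ be the subspace of sections supported on the open locus $\{\operatorname{rank}\varphi\ge k\}\cap Z$ gives the decreasing filtration $R^0\supset R^1\supset\cdots$, with $R^{a+1}=0$ and $J^k=R^k/R^{k+1}$ the space of sections supported on the locally closed rank-$k$ stratum.

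It then remains to identify each $J^k$ as a normalized induced representation. By Witt's theorem the rank-$k$ stratum of $Z$ is a single $\GL(X_a)\times H(V_m)$-orbit, namely the set of pairs $\big(X_{a-k}=\ker\varphi\subset X_a,\ \iota\colon X_a/X_{a-k}\xrightarrow{\sim}Y_k\big)$ with $Y_k\subset V_m$ isotropic of dimension $k$; its stabilizer in $\GL(X_a)\times G(W_{n-2a})\times H(V_m)$ is exactly $Q(X_{a-k},X_a)\times G(W_{n-2a})\times P(Y_k)$, and over the open orbit of conjugate-linear isomorphisms the fibre is $C_c^\infty(\Isom_{E,c}(X_k,Y_k))$ (with $X_k$ a complement of $X_{a-k}$ in $X_a$), tensored with the residual Weil representation $\omega_{W_{n-2a},V_{m-2k}}$ of $G(W_{n-2a})\times H(V_{m-2k})$, where $V_{m-2k}$ is the complement of $Y_k\oplus Y_k^*$ in $V_m$ --- this last reduction being the point where the ``$V_m$'' in the small Weil factor shrinks to ``$V_{m-2k}$''. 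Applying the standard identification of the coinvariants of such a space of sections with a normalized induced representation (the geometric lemma) yields the displayed formula; the character $\chi_V|{\det}_{X_{a-k}}|^{\lambda_{a-k}}$ on the $\GL(X_{a-k})$-block arises by combining the character by which $U(X_a)$ acts on the surviving $X_{a-k}$-directions, the splitting character $\chi_V$, and the modulus characters built into the normalizations of $R_{Q(X_a)}$ and $\Ind$, and this bookkeeping produces the shift $\lambda_{a-k}=s_{m,n}+\tfrac{a-k}{2}$. Finally $J^k=0$ for $k>a$ (there is no $k$-dimensional subspace of $X_a$) and for $k>q$ (there is no $k$-dimensional isotropic $Y_k\subset V_m$, so $\Isom_{E,c}(X_k,Y_k)=\emptyset$), and setting $k=a$ kills the $\GL(X_{a-k})$-block and gives the asserted bottom piece $J^a$.

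The main obstacle here is not conceptual but bookkeeping: pinning down the exact exponent $\lambda_{a-k}$ (equivalently, the shift $s_{m,n}$ and the half-integral contributions) and the precise stabilizers, and carrying the splitting characters $\chi_V,\chi_W$ and the metaplectic cover consistently through every step in agreement with the conventions of \cite{k94,gi}. It is for this reason that we will simply quote \cite[Lemma C.2]{gi} (and ultimately \cite{k83,mvw}) for the statement as given.
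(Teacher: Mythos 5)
The paper offers no proof of this lemma: it is simply quoted from Kudla \cite{k83} (see also \cite{mvw}), with the successive quotients taken verbatim from \cite[Lemma C.2]{gi}. Your sketch is a correct outline of the standard mixed-model/rank-stratification argument carried out in those references, and since you too ultimately defer to \cite[Lemma C.2]{gi} for the exact exponents, stabilizers and splitting-character bookkeeping, your treatment matches the paper's.
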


\vskip 5pt

\subsection{\bf A degenerate principal series.}
Another key ingredient used later is the decomposition of a certain
degenerate principal series representation. Consider the ``doubled"
space $W +  W^-$. This ``doubled" space contains the diagonally embedded 
\[  \Delta W \subset W + W^- \]
as a maximal isotropic subspace whose stabilizer
$Q(\Delta W)$ is a maximal parabolic subgroup of $G(W+ W^-)$ and has Levi factor $\GL(\Delta W)$. We may consider the degenerate principal series representation
\[  I(s)  =   {\rm Ind}_{Q(\Delta W)}^{G(W+W^-)} \chi_V |\det|^{s} \]
of $G(W + W^-)$  induced from the
character $\chi_V|\det|^s$ of $Q(\Delta W)$ (normalized induction).

  \vskip 5pt

 The following  lemma (see \cite{kr05}) describes the restriction of $I(s)$ to the subgroup $G(W) \times G(W)  \subset G(W + W^-)$.
\vskip 5pt

\begin{lem} \label{L:key} 
As a representation of $G(W) \times
G(W^-)$, $I(s)$
possesses an equivariant filtration
\[ 0 \subset I_0(s) \subset I_1(s) \subset\cdots\subset I_{q_W}(s) = {\rm
Ind}_{Q(\Delta W)}^{G(W+W^-)} \chi_V \cdot |\det|^{s} \] whose successive
quotients are given by
\begin{align}
 R_t(s) &= I_t(s) / I_{t-1}(s)   \notag \\
 &=     {\rm Ind}_{Q_{t} \times Q_t}^{G(W) \times
G(W^-)} \Bigg( \left(\chi_V |{\det}_{X_{t}}|^{s + \frac{t}{2}} \boxtimes
\chi_V |{\det}_{X_t}|^{s+\frac{t}{2}} \right) \\
&\hspace{1.5in}\otimes \left(  (\chi_V \circ {\det}_{W_{n-2t}^-}) \otimes C^{\infty}_c(
G(W_{n-2t})) \right) \Bigg). \notag 
\end{align} 
Here, the induction is normalized and
\begin{itemize}
\item[\textperiodcentered] $q_W$ is the Witt index of $W$;
\item[\textperiodcentered] $Q_t$ is the maximal parabolic subgroup of $G(W)$ stabilizing a
$t$-dimensional isotropic subspace $X_t$ of $W$, with Levi subgroup
$\GL(X_t) \times G(W_{n-2t})$, where $\dim W_{n-2t} = n-2t$.
\item[\textperiodcentered] $G(W_{n-2t})\times G(W_{n-2t})$ acts on $C^{\infty}_c(
G(W_{n-2t}))$ by  left-right translation.
\end{itemize} 
In particular,
\[ R_0 = R_0(s) =  (\chi_V \circ {\det}_{W^-}) \otimes C^{\infty}_c(G(W)) \]
is independent of $s$.
\end{lem}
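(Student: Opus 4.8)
The proof follows the classical Bruhat-type orbit analysis, carried out in this precise setting by Kudla and Rallis \cite{kr05}; I will outline the main points. Realize the flag variety $X := G(W+W^-)/Q(\Delta W)$ as the $G(W+W^-)$-orbit of $\Delta W$ inside the set of Lagrangian subspaces $\mathcal{L}$ of the split space $\WW := W+W^-$, and realize $I(s) = {\rm Ind}_{Q(\Delta W)}^{G(W+W^-)} \chi_V|\det|^s$ as the space of smooth sections of the $G(W+W^-)$-equivariant line bundle $\mathcal{E}_s$ on $X$ attached to the character $\chi_V|\det|^s\cdot\delta_{Q(\Delta W)}^{1/2}$ of $Q(\Delta W)$. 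For each $G(W)\times G(W^-)$-orbit $\mathcal{O}\subseteq X$ the sections of $I(s)$ supported on $\mathcal{O}$ form a subquotient (sections supported on an open union of orbits form a subobject, and sections on a closed union form a quotient), so the lemma reduces to the standard orbit analysis of $X$ under $G(W)\times G(W^-)$: enumerating the orbits and their closure relations, computing the stabilizer of a point in each orbit, and identifying the restriction of $\mathcal{E}_s$ by a modulus-character computation.

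For the orbit enumeration, attach to a Lagrangian $\mathcal{L}\subseteq W\oplus W^-$ the integer $t:=\dim(\mathcal{L}\cap W)$. Since $\mathcal{L}$ is isotropic and $W^-$ carries the negative of the form of $W$, one checks that $\dim(\mathcal{L}\cap W^-)=t$ as well, that $X_t:=\mathcal{L}\cap W$ and $X_t^-:=\mathcal{L}\cap W^-$ are isotropic, and that $\mathcal{L}/(X_t\oplus X_t^-)$ is a Lagrangian of $(X_t^{\perp}/X_t)\oplus((X_t^-)^{\perp}/X_t^-)\cong W_{n-2t}\oplus W_{n-2t}^-$ meeting each summand trivially, hence the graph of an isometry $W_{n-2t}\to W_{n-2t}^-$. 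As $G(W)$ (resp.\ $G(W^-)$) acts transitively on isotropic $t$-planes and $G(W_{n-2t})\times G(W_{n-2t}^-)$ acts transitively on graphs of isometries, the orbits of $G(W)\times G(W^-)$ on $X$ are precisely $\mathcal{O}_0,\dots,\mathcal{O}_{q_W}$ with $\mathcal{O}_t:=\{\mathcal{L}\in X:\dim(\mathcal{L}\cap W)=t\}$. The condition $\dim(\mathcal{L}\cap W)\ge t$ is closed, so $\overline{\mathcal{O}_t}=\bigsqcup_{t'\ge t}\mathcal{O}_{t'}$ and $\bigsqcup_{t'\le t}\mathcal{O}_{t'}$ is open; defining $I_t(s)$ to be the subspace of sections of $I(s)$ supported on $\bigsqcup_{t'\le t}\mathcal{O}_{t'}$ yields the asserted increasing filtration with $I_{q_W}(s)=I(s)$ and with $R_t(s)=I_t(s)/I_{t-1}(s)$ the sections supported on the single orbit $\mathcal{O}_t$.

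For the stabilizer, fix the base point $\mathcal{L}_t=X_t\oplus\Delta W_{n-2t}\oplus X_t^-$, where $\Delta W_{n-2t}$ is the graph of a fixed identification $W_{n-2t}\cong W_{n-2t}^-$ inside a chosen complement of $X_t$ in $X_t^{\perp}$. A direct check—the unipotent radicals $U_t\subseteq Q_t$, $U_t^-\subseteq Q_t^-$ act trivially on $W_{n-2t}$ and hence fix the graph part—shows $\operatorname{Stab}_{G(W)\times G(W^-)}(\mathcal{L}_t)=S_t:=\big(\GL(X_t)\times\GL(X_t^-)\times\Delta G(W_{n-2t})\big)\ltimes(U_t\times U_t^-)\subseteq Q_t\times Q_t^-$, where $\Delta G(W_{n-2t})$ is the diagonal in $G(W_{n-2t})\times G(W_{n-2t}^-)$. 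Thus $\mathcal{O}_t\cong(G(W)\times G(W^-))/S_t$ fibers over $(G(W)\times G(W^-))/(Q_t\times Q_t^-)$ with fiber $(G(W_{n-2t})\times G(W_{n-2t}^-))/\Delta G(W_{n-2t})\cong G(W_{n-2t})$, and induction in stages gives $R_t(s)={\rm Ind}_{Q_t\times Q_t^-}^{G(W)\times G(W^-)}\big(\eta_t\otimes C^{\infty}_c(G(W_{n-2t}))\big)$, with $C^{\infty}_c(G(W_{n-2t}))$ the $G(W_{n-2t})\times G(W_{n-2t}^-)$-module under left-right translation and $\eta_t$ a character of $\GL(X_t)\times\GL(X_t^-)$ times a character of $G(W_{n-2t}^-)$. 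To identify $\eta_t$, transport $S_t$ into $Q(\Delta W)$ by an element $\gamma_t$ with $\gamma_t\Delta W=\mathcal{L}_t$ and evaluate $\chi_V|\det_{\Delta W}|^s\cdot\delta_{Q(\Delta W)}^{1/2}$: on $\Delta G(W_{n-2t})$ this contributes $\chi_V\circ\det_{W_{n-2t}^-}$ (the $|\det|^s$- and $\delta^{1/2}$-factors are trivial on isometries), while on each $\GL$-factor one gets $\chi_V|\det_{X_t}|^s$ corrected by the modulus-character discrepancy $\delta_{Q(\Delta W)}^{1/2}\,\delta_{Q_t}^{-1/2}$, which restricts to $|\det_{X_t}|^{t/2}$. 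This gives exactly the stated $R_t(s)$; when $t=0$ the $\GL$-factors disappear, $S_0=\Delta G(W)$, $\gamma_0=1$, and $R_0(s)=(\chi_V\circ\det_{W^-})\otimes C^{\infty}_c(G(W))$, manifestly independent of $s$.

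The main technical point is the last one: the careful bookkeeping of half-sums of roots needed to show that the unramified twist on the $\GL$-factors is exactly $|\det_{X_t}|^{t/2}$ and that the induction producing $R_t(s)$ is the normalized one—essentially a comparison of $\delta_{Q(\Delta W)}$ with $\delta_{Q_t}$ along $\gamma_t$. Everything else is the formal orbit decomposition of $X$ together with Mackey theory and induction in stages.
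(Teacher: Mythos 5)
The paper gives no proof of this lemma, simply citing Kudla--Rallis \cite{kr05}, and your argument---realizing $I(s)$ as sections of a line bundle on the space of Lagrangians of $W+W^-$, stratifying by the $G(W)\times G(W^-)$-orbits $\{\dim(\mathcal{L}\cap W)=t\}$ with $0\le t\le q_W$, computing the stabilizer $(\GL(X_t)\times\GL(X_t^-)\times\Delta G(W_{n-2t}))\ltimes(U_t\times U_t^-)$, and applying Mackey theory together with the modulus comparison $\delta_{Q(\Delta W)}^{1/2}\delta_{Q_t}^{-1/2}\big|_{\GL(X_t)}=|{\det}_{X_t}|^{t/2}$---is exactly the standard proof underlying that citation, and its steps check out. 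The only point left implicit is the metaplectic case $G(W)=\Mp(W)$, where the identical orbit analysis is carried out for the genuine inducing character on the cover, as in the cited reference.
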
 

\vskip 5pt

\subsection{\bf Boundary.}
The above lemma suggests a  notion which plays an important role in this paper.
For $\pi \in {\rm Irr}(G(W))$, we shall say that $\pi$ occurs on the
boundary of $I(s)$ if there exists $0<t \leq q_W$ such that
\[  \Hom_{G(W)}(R_t(s),  \pi)  \ne 0. \]
 By Bernstein's Frobenius reciprocity, this is equivalent to
 \begin{equation}  \label{E:boundary2}
  \Hom_{\GL(X_t)}(\chi_V  |\det|^{s  + \frac{t}{2}},
  R_{\overline{Q}(X_t)}(\pi) )\ne 0, \end{equation}
where $\overline{Q}(X_t)  = L(X_t)  \cdot \overline{U}(X_t)$ stands
for the parabolic subgroup opposite to $Q(X_t)$.
Dualizing and using the MVW involution, this is in turn equivalent to   
 \begin{equation} \label{E:boundary}
 \pi  \hookrightarrow   \chi_V  |{\det}_{\GL(X_t)}|^{-s   -
   \frac{t}{2}}   \rtimes \sigma  \end{equation}
 for some irreducible representation $\sigma$ of $G(W_{n-2t})$. 
 This terminology  is due to
 Kudla and Rallis and the reader may consult  \cite[Definition
 1.3]{kr05} for the explanation of the use of ``boundary''.

\vskip 10pt

\subsection{\bf Outline of proof.}\label{S:outline}
With the basic notations introduced, we can now give a brief outline
of the proof of Theorem \ref{T:main}.
First of all, there is no loss of generality in
assuming that 
\begin{equation}\label{E:assumption}
m \leq n+\epsilon_0, 
\end{equation}
so that
\begin{equation}\label{E:s_m_n}
 s_{m,n}  = \frac{ m - (n+\epsilon_0)}{2} \leq 0,
\end{equation}
because otherwise one can switch
the roles of $G(W)$ and $H(V)$. We shall assume this henceforth.
\vskip 5pt

The proof proceeds by induction on $\dim W$, with the base case with $\dim W = 0$ being trivial.
The inductive step is divided into two different parts.  
\vskip 5pt

The first part, which is
given in Section \ref{S:non_boundary_case}, deals with the case when
$\pi$ does not occur on the boundary of $I(-s_{m,n})$, which cover ``almost all''
representations.   The argument for this case has been sketched by
Howe  \cite[\S 11]{H1}, using a see-saw dual pair and Lemma
\ref{L:key} above. 
 For this part, the induction hypothesis is not used.
\vskip 10pt

The second part of the proof, which is given in Section
\ref{S:boundary_case}, deals with the case when $\pi$ occurs on the
boundary of $I(-s_{m,n})$.  To use the induction hypothesis, we use
Lemma \ref{L:kudla} and   a key idea of Minguez  \cite{mi}
in his proof of the Howe duality conjecture for general linear groups. 
\vskip 5pt

In Section \ref{S:assem}, we assemble the results of Section \ref{S:non_boundary_case} and Section \ref{S:boundary_case} to complete the proof of Theorem \ref{T:main}.

\vskip 15 pt

\section{\bf A See-Saw Argument} \label{S:non_boundary_case}
In this section, we will give a proof of \eqref{E:HD} when at least one of  $\pi$ or $\pi' \in\Irr(G(W))$ does
not occur on the boundary of $I(-s_{m,n})$ by assuming
\eqref{E:assumption} or equivalently \eqref{E:s_m_n}.
As is mentioned at the end of the last section, though we prove
\eqref{E:HD} by induction on $\dim W$, it turns out that for this
case, one can prove \eqref{E:HD} without appealing to the induction
hypothesis. Namely, we shall prove
  
\begin{thm} \label{T:non_boundary} 
Assume that $m \leq n+\epsilon_0$ and suppose that $\pi \in {\rm
  Irr}(G(W))$ does not occur on the boundary of $I(-s_{m,n})$. Then for
any $\pi' \in {\rm Irr}(G(W))$,
   \[  \dim \Hom_{H(V)}(\theta(\pi), \theta(\pi'))  \leq \delta_{\pi, \pi'}.  \]
  In particular, $\theta(\pi)$ is either zero or irreducible, and
  moreover for  any irreducible $\pi'$,
\[ 
0 \ne \theta(\pi) \subset \theta(\pi') \Longrightarrow \pi \cong\pi'.
\]
 \end{thm}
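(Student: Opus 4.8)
\textbf{Proof proposal for Theorem \ref{T:non_boundary}.}
The plan is to exploit the doubling see-saw pair
\[
\begin{matrix}
G(W+W^-) & & H(V) \times H(V) \\
\cup & & \cup \\
G(W) \times G(W^-) & & \Delta H(V)
\end{matrix}
\]
together with the fundamental see-saw identity
\[
\Hom_{\Delta H(V)}\bigl(\omega_{W,V} \otimes \omega_{W^-,V},\ \CC\bigr)
\cong \Hom_{G(W)\times G(W^-)}\bigl(\Omega_{W+W^-,V},\ \CC\bigr),
\]
where the right-hand Weil representation of $G(W+W^-)\times H(V)$, restricted along the Siegel parabolic $Q(\Delta W)$, gives the degenerate principal series $I(s_{m,n})$ (up to a twist by $\chi_V \circ \det$); this is the standard input from \cite{k83, kr05}. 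First I would show that for irreducible $\pi,\pi'$ one has
\[
\dim\Hom_{H(V)}\bigl(\theta(\pi),\theta(\pi')\bigr)
\;\le\; \dim\Hom_{H(V)}\bigl(\Theta(\pi),\Theta(\pi')\bigr)
\;=\; \dim\Hom_{G(W)\times G(W)}\bigl(I(-s_{m,n}),\ \pi\boxtimes (\pi')^{\vee,MVW}\bigr),
\]
the last equality coming from unwinding the definition of $\Theta$ as the maximal $\pi$-isotypic quotient and applying the see-saw identity to $W + W^-$ paired against $V$; note one uses here that $\Theta(\pi')^{\vee}\cong\Theta((\pi')^{\vee})$ up to MVW, and that $(\pi')^{\vee,MVW}\cong\pi'$ as an abstract representation is not needed — what is needed is just to set up the $\Hom$ correctly as in \cite[\S 2]{gt}.

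The second and main step is to bound $\dim\Hom_{G(W)\times G(W)}(I(-s_{m,n}),\ \pi\boxtimes\sigma)$ using the filtration of Lemma \ref{L:key}. The bottom piece $R_0 = (\chi_V\circ\det_{W^-})\otimes C^\infty_c(G(W))$ contributes exactly $\delta_{\pi,\sigma}$ to $\dim\Hom$ (up to the appropriate twist matching $\sigma$ with $\pi$), since $\Hom_{G(W)\times G(W)}(C^\infty_c(G(W)),\ \pi\boxtimes\sigma)\cong\Hom_{G(W)}(\pi^\vee,\sigma)$ — this is the ``$\le\delta$'' that Howe observed. It then remains to kill the contributions of the higher graded pieces $R_t(-s_{m,n})$ for $0<t\le q_W$. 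By the description in Lemma \ref{L:key}, $R_t(s)$ is induced from $Q_t\times Q_t$, so by Bernstein's Frobenius reciprocity its contribution to $\Hom(R_t,\ \pi\boxtimes\sigma)$ is governed by the Jacquet module $R_{\overline{Q}(X_t)}(\pi)$ containing the character $\chi_V|\det|^{-s_{m,n}+t/2}$ of $\GL(X_t)$ — equivalently, by \eqref{E:boundary}, by whether $\pi\hookrightarrow \chi_V|\det_{\GL(X_t)}|^{s_{m,n}-t/2}\rtimes(-)$. The hypothesis that $\pi$ does \emph{not} occur on the boundary of $I(-s_{m,n})$ is precisely designed to make \eqref{E:boundary2} fail for every such $t$, so all higher pieces contribute $0$ and we get $\dim\Hom_{G(W)\times G(W)}(I(-s_{m,n}),\ \pi\boxtimes\sigma)\le\delta_{\pi,\sigma}$.

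Chaining the two steps gives $\dim\Hom_{H(V)}(\theta(\pi),\theta(\pi'))\le\delta_{\pi,\pi'}$, and the ``in particular'' assertions follow formally: taking $\pi'=\pi$ forces $\theta(\pi)$ to have a one-dimensional endomorphism space, hence (being semisimple) irreducible or zero; and $0\ne\theta(\pi)\subset\theta(\pi')$ gives a nonzero element of $\Hom_{H(V)}(\theta(\pi),\theta(\pi'))$, forcing $\pi\cong\pi'$. I expect the main obstacle to be the careful bookkeeping in the first step: correctly tracking the splitting characters $\chi_V,\chi_W$ and the twist by $\chi_V\circ\det$, identifying the degenerate principal series parameter as exactly $s=-s_{m,n}$ (so that $\lambda$-shifts line up with \eqref{E:s_m_n}), and handling the MVW/contragredient bookkeeping so that the see-saw $\Hom$ genuinely computes $\Hom_{H(V)}(\Theta(\pi),\Theta(\pi'))$ rather than some twisted variant; the filtration argument in the second step is, by contrast, a routine application of Lemma \ref{L:key} and Bernstein reciprocity once the non-boundary hypothesis is in hand.
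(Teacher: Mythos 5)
Your proposal follows essentially the same route as the paper: the doubling see-saw, the Rallis quotient map $I(-s_{m,n})\twoheadrightarrow \Theta_{V,W+W^-}(\chi_W)$, and the Kudla--Rallis filtration of Lemma \ref{L:key}, with the non-boundary hypothesis killing the pieces $R_t$ for $t>0$ and $R_0=C^\infty_c(G(W))$ (twisted) giving the bound $\delta_{\pi,\pi'}$. The only caveat is that your intermediate ``equality'' with $\dim\Hom_{H(V)}(\Theta(\pi),\Theta(\pi'))$ should really be a chain of one-way containments (the see-saw computes $\Hom_{H(V)^\Delta}(\Theta(\pi')\otimes\Theta(\pi)^{MVW},\CC)$, which merely \emph{contains} $\Hom_{H(V)}(\theta(\pi'),\theta(\pi))$, and $I(-s_{m,n})$ only \emph{surjects} onto $\Theta_{V,W+W^-}(\chi_W)$), but since only the inequality in that direction is needed, this does not affect the argument.
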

\begin{proof}
First, we consider the following see-saw diagram
\[ 
\xymatrix{ G(W+W^-)\ar@{-}[d]_{}\ar@{-}[dr]&H(V)\times
H(V)\ar@{-}[d]\\ G(W)\times G(W^-)\ar@{-}[ur]&H(V)^{\Delta}, }
\] 
where $W^{-}$ denotes the space obtained from $W$ by multiplying
the form by $-1$, so that $G(W^-) = G(W)$.  
Given  irreducible representations $\pi$ and $\pi'$ of $G(W)$,  the see-saw identity
\cite[\S 6.1]{gi} gives:
\begin{align}\label{E:see-saw}
&\Hom_{G(W) \times G(W)} ( \Theta_{V, W + W^-}(\chi_W), \pi' \otimes
\pi^{\vee}\chi_V)\\
=&\Hom_{H(V)^\Delta}( \Theta(\pi') \otimes
\Theta(\pi)^{MVW}, \CC). \notag
\end{align}
 Here $\Theta_{V,W + W^-}(\chi_W)$ denotes the big theta lift of the
character $\chi_W$  of $H(V)$ to $G(W+ W^-)$. 
\vskip 5pt

We analyze each side of the see-saw identity in turn. 
For the RHS,  one has
\begin{align}\label{E:see-saw2}
  \Hom_{H(V)^\Delta}( \Theta(\pi') \otimes
\Theta(\pi)^{MVW}, \CC) 
\supseteq &\Hom_{H(V)^\Delta}( \theta(\pi') \otimes
\theta(\pi)^{\vee}, \CC) \\
= &\Hom_{H(V)}( \theta(\pi'),  \theta(\pi)). \notag
\end{align}

\vskip 5pt

For the LHS of the see-saw identity \eqref{E:see-saw}, we need to
understand $\Theta_{V,W + W^-}(\chi_W)$. 
 It is known that  $\Theta_{V,W + W^-}(\chi_W)$ is irreducible  (see
 \cite[Prop. 7.2]{gi}). Moreover, it was shown by Rallis that
\begin{equation}\label{E:Rallis}
\Theta_{V, W + W^-}(\chi_W) \hookrightarrow I(s_{m,n}) =  {\rm Ind}_{Q(\Delta
W)}^{G(W+W^-)} \chi_V |\det|^{s_{m,n}}.
\end{equation}
  Since  $s_{m,n} \leq 0$,  there is a surjective map (see \cite[Prop. 8.2]{gi})
\[ I(-s_{m,n})  \longrightarrow \Theta_{V, W + W^-}(\chi_W). \] 
Hence the see-saw identity \eqref{E:see-saw} gives:
\[ \Hom_{G(W) \times G(W)}(  I(-s_{m,n}), \pi' \otimes \pi^{\vee}\chi_V) \supseteq \Hom_{H(V)}
(\theta(\pi') , \theta(\pi)).  \] 
 To prove the theorem, it suffices to prove that the LHS has dimension $\leq 1$
 with equality only if $\pi \cong \pi'$.
 \vskip 5pt

Applying Lemma \ref{L:key},  we claim that if $\pi$ is not
on the boundary of $I(-s_{m,n})$, the natural restriction map
\[ \Hom_{G(W) \times G(W)}(  I(-s_{m,n}), \pi' \otimes \pi^{\vee}\chi_V ) \longrightarrow \Hom_{G(W)
\times G(W)}( R_0, \pi' \otimes \pi^{\vee}\chi_V) \]
 is injective. This will
imply the theorem since the RHS has dimension $\leq 1$, with equality
if and only if $\pi = \pi'$.  \vskip 5pt

To deduce the claim, it suffices to to show that for each $0 < t \leq q_W$,
\begin{equation}\label{E:hom_space}
 \Hom_{G(W) \times G(W)}( R_t(-s_{m,n}) , \pi' \otimes \pi^{\vee}\chi_V) = 0.
\end{equation}
By Bernstein's Frobenius reciprocity, this Hom space is equal to
\begin{align*} 
&\Hom_{L(X_t) \times L(X_t)} \Big( \big( \chi_V |\det|^{-s_{m,n} +
\frac{t}{2}} \boxtimes\chi_V |\det|^{-s_{m,n}+\frac{t}{2}} \big)\\
&\hspace{70pt} \otimes \left( (\chi_V \circ {\det}_{W_{n-2t}^-})
  \otimes C^{\infty}_c( G(W_{n-2t}) )\right),\; R_{\overline{Q}_t}(\pi')
\otimes R_{\overline{Q}_t} (\pi^{\vee}\chi_V)\Big).
\end{align*}
 Hence we deduce that the equation \eqref{E:hom_space} holds if
\[  \Hom_{\GL(X_t)}(\chi_V  |\det|^{- s_{m,n}  + \frac{t}{2}},  R_{\overline{Q}_t}(\pi^{\vee}\chi_V) )=  0. \]
By dualizing and Bernstein's Frobenius reciprocity, this is equivalent to
\[  \Hom_{\GL(X_t)}(R_{Q_t}(\pi \cdot (\chi_V^{-1}\circ {\det}_W)) , (\chi_V^{-1} \circ {\det}_{\GL(X_t)}) \cdot  |\det|^{s_{m,n} - \frac{t}{2}}) = 0. \]
Now note that
\[  \chi_V \circ {\det}_{W} |_{\GL(X_t)} = \chi_V^2 \circ {\det}_{\GL(X_t)}. \]
Hence the above condition is equivalent to 
\[  \Hom_{\GL(X_t)}(R_{Q_t}(\pi) , \chi_V |\det|^{s_{m,n} - \frac{t}{2}}) = 0. \]
Since this condition holds when $\pi$ is not on the
boundary of $\omega_{W,V}$, the equation \eqref{E:hom_space} is
proved. This completes the proof of Theorem \ref{T:non_boundary}.
\end{proof}
\vskip 10pt

Finally, let us note that the above argument
also gives the following proposition, which we will use later.
\vskip 5pt

\begin{prop} \label{P:non_boundary}
Assume $m\leq n+\epsilon_0$. If $\pi\ne \pi'\in{\rm Irr}(G(W))$ are such that 
\begin{equation}\label{E:prop}
 \Hom_{H(V)}(\theta_{W, V}(\pi) , \theta_{W, V}(\pi')) \ne 0, 
\end{equation}
 then there exists $t>0$ such that   
\[  \pi \hookrightarrow \chi_V |{\det}_{\GL(X_t)}|^{s_{m,n} - \frac{t}{2}} \rtimes \tau\]
and
\[  \pi'\hookrightarrow \chi_V |{\det}_{\GL(X_t)}|^{s_{m,n} - \frac{t}{2}} \rtimes \tau'\]
for some $\tau$ and $\tau'\in {\Irr}(G(W_{n-2t}))$.
\end{prop}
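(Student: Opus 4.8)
The plan is to extract Proposition \ref{P:non_boundary} as a byproduct of the see-saw argument already carried out in the proof of Theorem \ref{T:non_boundary}. Recall that the see-saw identity \eqref{E:see-saw} together with Rallis's embedding \eqref{E:Rallis} and the surjection $I(-s_{m,n}) \twoheadrightarrow \Theta_{V,W+W^-}(\chi_W)$ gives, for any irreducible $\pi$ and $\pi'$ of $G(W)$,
\[
\Hom_{G(W)\times G(W)}\big( I(-s_{m,n}),\, \pi'\otimes \pi^\vee\chi_V\big) \supseteq \Hom_{H(V)}\big(\theta(\pi'),\theta(\pi)\big).
\]
So if \eqref{E:prop} holds then the left-hand side is nonzero. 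Using the filtration of $I(-s_{m,n})$ from Lemma \ref{L:key}, nonvanishing of this $\Hom$ space forces $\Hom_{G(W)\times G(W)}(R_t(-s_{m,n}), \pi'\otimes\pi^\vee\chi_V) \ne 0$ for some $t$ with $0 \le t \le q_W$. If the only such $t$ were $t=0$, then, since $R_0 = (\chi_V\circ\det_{W^-})\otimes C^\infty_c(G(W))$, we would get $\Hom_{G(W)\times G(W)}(C^\infty_c(G(W)), \pi'\otimes\pi^\vee) \ne 0$, which by the standard computation of $C^\infty_c(G(W))$ as a $G(W)\times G(W)$-module (it realizes $\bigoplus_{\rho}\rho\boxtimes\rho^\vee$) forces $\pi\cong\pi'$, contradicting $\pi\ne\pi'$. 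Hence there must exist some $t>0$ with $\Hom_{G(W)\times G(W)}(R_t(-s_{m,n}), \pi'\otimes\pi^\vee\chi_V)\ne 0$.

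Next I would unwind this nonvanishing exactly as in the displayed calculations at the end of the proof of Theorem \ref{T:non_boundary}, but now applied symmetrically to both tensor factors. By Bernstein's Frobenius reciprocity, nonvanishing of $\Hom_{G(W)\times G(W)}(R_t(-s_{m,n}), \pi'\otimes\pi^\vee\chi_V)$ implies (projecting onto the $\GL(X_t)$-components in each factor) both
\[
\Hom_{\GL(X_t)}\big(\chi_V|\det|^{-s_{m,n}+\frac{t}{2}},\, R_{\overline{Q}_t}(\pi'\chi_V)\big)\ne 0
\quad\text{and}\quad
\Hom_{\GL(X_t)}\big(\chi_V|\det|^{-s_{m,n}+\frac{t}{2}},\, R_{\overline{Q}_t}(\pi^\vee\chi_V)\big)\ne 0,
\]
together with a nonvanishing condition on the $G(W_{n-2t})\times G(W_{n-2t})$-part coming from $C^\infty_c(G(W_{n-2t}))$, which (again by the Peter--Weyl-type decomposition of $C^\infty_c$) links the $G(W_{n-2t})$-pieces of the two Jacquet modules but does not further constrain the $\GL(X_t)$-twists. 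Then, running the same chain of manipulations as in the proof of Theorem \ref{T:non_boundary} — dualize, apply Bernstein's Frobenius reciprocity, and use the identity $\chi_V\circ\det_W|_{\GL(X_t)} = \chi_V^2\circ\det_{\GL(X_t)}$ — the condition on $\pi^\vee\chi_V$ becomes $\Hom_{\GL(X_t)}\big(R_{Q_t}(\pi),\, \chi_V|\det|^{s_{m,n}-\frac{t}{2}}\big)\ne 0$, and similarly for $\pi'$. Dualizing once more and invoking the MVW involution exactly as in the derivation of \eqref{E:boundary} from \eqref{E:boundary2}, this is equivalent to the existence of irreducible representations $\tau$ of $G(W_{n-2t})$ with $\pi \hookrightarrow \chi_V|\det_{\GL(X_t)}|^{s_{m,n}-\frac{t}{2}}\rtimes\tau$, and likewise $\pi'\hookrightarrow \chi_V|\det_{\GL(X_t)}|^{s_{m,n}-\frac{t}{2}}\rtimes\tau'$ for some irreducible $\tau'$ of $G(W_{n-2t})$ — which is precisely the assertion of the proposition.

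The main point requiring care — and the step I expect to be the real content beyond Theorem \ref{T:non_boundary} — is extracting the \emph{same} parabolic $\GL(X_t)$ (i.e. the same $t$ and the same twist $\chi_V|\det|^{s_{m,n}-t/2}$) for both $\pi$ and $\pi'$ simultaneously. In the proof of Theorem \ref{T:non_boundary} one only needed, for each $t>0$, that the relevant $\Hom$ space for $\pi$ vanished; here one is in the contrapositive situation, so nonvanishing of $\Hom_{G(W)\times G(W)}(R_t(-s_{m,n}),\pi'\otimes\pi^\vee\chi_V)$ must be shown to factor through conditions on each of $\pi$ and $\pi'$ with matching parameter $t$. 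This follows because $R_t(-s_{m,n})$ is (normalizedly) induced from a representation of $Q_t\times Q_t$ whose $\GL(X_t)\times\GL(X_t)$-part is the external tensor product $\chi_V|\det|^{-s_{m,n}+t/2}\boxtimes\chi_V|\det|^{-s_{m,n}+t/2}$, so Bernstein reciprocity genuinely decouples into one condition per factor with a common $t$; one just has to make sure the $C^\infty_c(G(W_{n-2t}))$-factor does not obstruct the existence of the required $\tau,\tau'$, which it does not, since $\Hom_{G(W_{n-2t})\times G(W_{n-2t})}(C^\infty_c(G(W_{n-2t})),\,\sigma'\otimes\sigma^\vee)$ is nonzero precisely when $\sigma\cong\sigma'$ and in particular is nonzero for a suitable common irreducible quotient, so appropriate $\tau,\tau'$ can be chosen (indeed one may take $\tau\cong\tau'$, though the statement does not require it).
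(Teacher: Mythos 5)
Your proposal is correct and follows essentially the same route as the paper: the paper's own proof of this proposition is a one-line appeal to the see-saw computation in Theorem \ref{T:non_boundary} (the $R_0$ piece dies since $\pi\ne\pi'$, so some $R_t(-s_{m,n})$ with $t>0$ contributes, and Bernstein reciprocity decouples this into the boundary condition for $\pi$ and $\pi'$ with the same $t$), and you have simply written out those details. The only cosmetic difference is the ordering of $\pi$ and $\pi'$ in the Hom space, which is immaterial since the conclusion is symmetric.
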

\begin{proof}
If $\pi\ne \pi'$ but $\Hom_{H(V)}(\theta_{W, V}(\pi) ,
\theta_{W, V}(\pi')) \ne 0$, then arguing as above one
must have $\Hom_{G(W) \times G(W)}( R_t(-s_{m,n}) , \pi\otimes
\pi'^{\vee}\chi_V)\neq 0$ for some $t>0$, which gives the conclusion
of the proposition. 
\end{proof}
 
Of course, the hypothesis of this proposition contradicts the Howe
duality \eqref{E:HD}, and hence is never satisfied. So what this
proposition is saying is that if \eqref{E:HD} is to be violated as in
\eqref{E:prop}, it must be violated by representations $\pi$ and $\pi'$ which occur on the
boundary for ``the same $t$''.  

 \vskip 15pt

\section{\bf Life on the Boundary}\label{S:boundary_case}
After Theorem \ref{T:non_boundary}, we see that 
to prove Theorem \ref{T:main} or equivalently \eqref{E:HD}, it remains
to consider the case when both  $\pi$ and $\pi'$  occur
on the boundary of $I(-s_{m,n})$, as in Proposition
\ref{P:non_boundary}. In this section, we examine what life looks like
on the boundary. We continue to assume that $m \leq n + \epsilon_0$, or equivalently that $s_{m,n} \leq 0$ (see
\eqref{E:assumption}  and \eqref{E:s_m_n}).

\vskip 5pt

\subsection{\bf An idea of Minguez.}
Since $\pi$ occurs on the boundary of $I(-s_{m,n})$, we have
\[  \pi \hookrightarrow  \chi_V  |{\det}_{\GL_{(X_t)}}|^{s_{m,n} - \frac{t}{2}} \rtimes  \pi_0  \]
for some $\pi_0 \in {\rm Irr}(G(W_{n-2t}))$ and some $t>0$.
By induction in stages, we have
\[ \pi \hookrightarrow  (\chi_V |-|^{s_{m,n} - t + \frac{1}{2}} \times
\cdots\times \chi_V | -|^{s_{m,n} -\frac{1}{2}}) \rtimes \pi_0. \]
For simplicity, let us set
\[  s_{m,n,t}  = s_{m,n} - t + \frac{1}{2} < 0. \]
 \vskip 5pt
 
 Now let us use a key idea of Minguez \cite{mi}.  Let $a>0$ be maximal such that
\[  \pi \hookrightarrow   \chi_V | -|^{s_{m,n,t}} \cdot 1^{\times a}   \rtimes \sigma,\]
where
\[
1^{\times a}=\underbrace{1\times\cdots\times 1}_{a\;
  \text{times}}=\Ind_B^{\GL_a}1\otimes\cdots\otimes 1
\]
and  $\sigma$ is an irreducible representation of $G(W_{n-2a})$.
 To simplify notation, let us set
\begin{equation}\label{E:sigma_a}
 \sigma_a :=  \chi_V | -|^{s_{m,n,t}} \cdot 1^{\times a}   \rtimes \sigma.
\end{equation}
Note that the representation $1^{\times a}$ is irreducible and generic by \cite[Theorem 9.7]{z80}.
 \vskip 5pt
 
 Observe that if $\pi \hookrightarrow \sigma_a$ with $a>0$ maximal, then
\[  \sigma\nsubseteq   \chi_V |-|^{s_{m,n,t}} \rtimes \sigma' \]
for any $\sigma'$.  In fact, the converse is also true, as we shall
verify in Corollary \ref{C:max}(ii) below.

\vskip 5pt

The chief reason for considering $\sigma_a$ with $a>0$ maximal is the following proposition:

\begin{prop}\label{P:unique1}
 Suppose $\sigma_a = \chi_V | -|^{s_{m,n,t}} \cdot
  1^{\times a}   \rtimes \sigma$ is such that
   \[ \sigma\nsubseteq   \chi_V
  |-|^{s_{m,n,t}} \rtimes \sigma' \]
     for any $\sigma'$. Then $\sigma_a$  has a unique
  irreducible submodule.
\end{prop}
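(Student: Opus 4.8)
The plan is to show that $\sigma_a = \chi_V|-|^{s_{m,n,t}} \cdot 1^{\times a} \rtimes \sigma$ has a unique irreducible submodule by identifying that submodule via a Frobenius-reciprocity/Jacquet-module argument, in the style of Minguez. First I would rename $\rho := \chi_V|-|^{s_{m,n,t}}$ for brevity and recall that $\rho \cdot 1^{\times a}$ is the (irreducible, by Zelevinsky) representation $\rho \times \rho \cdots$ — more precisely $1^{\times a}$ is irreducible and $\chi_V|-|^{s_{m,n,t}}\cdot 1^{\times a}$ is its twist, hence irreducible and generic. The key point is to compute (a relevant piece of) the Jacquet module $R_{Q(X_a)}(\sigma_a)$ along the parabolic $Q(X_a)$ of $G(W)$ with Levi $\GL_a \times G(W_{n-2a})$, using the geometric lemma (Bernstein–Zelevinsky / the version for classical groups). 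Because $\sigma_a = \Ind_{Q(X_a)}^{G(W)}\big( \rho\cdot 1^{\times a} \otimes \sigma\big)$, the geometric lemma expresses $R_{Q(X_a)}(\sigma_a)$ as a sum over Weyl-group double-coset representatives; exactly one of these contributes the ``expected'' term $\rho\cdot 1^{\times a} \otimes \sigma$, while the remaining terms involve Jacquet modules of $\sigma$ twisted by various characters.

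\textbf{Key steps.} (1) Show that $\rho\cdot 1^{\times a}\otimes \sigma$ occurs with multiplicity one in $R_{Q(X_a)}(\sigma_a)$, and that all other irreducible subquotients of $R_{Q(X_a)}(\sigma_a)$ of the form $\tau \otimes \tau'$ with $\tau$ a representation of $\GL_a$ have $\tau \ncong \rho\cdot 1^{\times a}$. This is where the maximality hypothesis ``$\sigma \nsubseteq \rho \rtimes \sigma'$ for any $\sigma'$'' enters: the other geometric-lemma terms all have $\tau$ built from a nontrivial Jacquet module of $\sigma$ along some parabolic, hence involve an inducing datum incompatible with the cuspidal support of $1^{\times a}$ unless $\sigma$ itself contains $\rho$ in the relevant Jacquet module, i.e.\ unless $\sigma\hookrightarrow \rho\rtimes\sigma'$ for some $\sigma'$, which is excluded. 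The supercuspidal supports here are governed by the segment $[\chi_V|-|^{s_{m,n,t}}]$ consisting of copies of a single supercuspidal line, which makes the combinatorics of which terms can match $\rho\cdot 1^{\times a}$ tractable. (2) Deduce: if $\pi_1, \pi_2 \hookrightarrow \sigma_a$ are two irreducible submodules, then by Frobenius reciprocity $\rho\cdot 1^{\times a}\otimes\sigma \twoheadleftarrow R_{Q(X_a)}(\pi_i)$ — wait, more precisely $\Hom_{G(W)}(\pi_i, \sigma_a) = \Hom_{L(X_a)}(R_{Q(X_a)}(\pi_i), \rho\cdot 1^{\times a}\otimes\sigma)$ is nonzero. (3) Combine with step (1): the multiplicity-one statement forces any such $\pi_i$ to be the unique irreducible submodule. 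Concretely, the sum $\pi_1 + \pi_2 \subset \sigma_a$ would give a map whose image has Jacquet module surjecting onto at least a two-dimensional space of $\rho\cdot 1^{\times a}\otimes\sigma$-isotypic quotients, contradicting multiplicity one — unless $\pi_1 = \pi_2$.

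\textbf{Main obstacle.} The main difficulty will be step (1): carefully enumerating the terms in the geometric lemma for the Levi $\GL_a\times G(W_{n-2a})$ acting on the inducing datum $(\rho\cdot 1^{\times a})\otimes\sigma$, and checking that none of the ``cross terms'' — those mixing the $\GL_a$-part and the $G(W_{n-2a})$-part, including the conjugate-dual contributions $\,{}^c(\cdot)^\vee$ that appear for classical-group parabolics — can produce a $\GL_a$-factor isomorphic to $\rho\cdot 1^{\times a}$. This requires a bookkeeping argument on supercuspidal supports: one writes down the supercuspidal support of $\rho\cdot 1^{\times a}$ (namely $a$ copies of $\chi_V|-|^{s_{m,n,t}}$, all on one cuspidal line), and observes that for a cross term to contribute this $\GL_a$-factor, the Jacquet module of $\sigma$ along the corresponding parabolic of $G(W_{n-2a})$ must contain $\chi_V|-|^{s_{m,n,t}}$ (or its appropriate conjugate-dual) as a $\GL_1$-factor, i.e.\ $\sigma\hookrightarrow \chi_V|-|^{s_{m,n,t}}\rtimes\sigma'$, contradicting the hypothesis. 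Once this is established, the uniqueness of the irreducible submodule follows formally from multiplicity one in the Jacquet module together with Frobenius reciprocity, and I would also note in passing that this sets up the converse direction promised in Corollary \ref{C:max}(ii).
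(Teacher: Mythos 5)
Your strategy is the same as the paper's: the paper isolates your Step (1) as a stand-alone technical lemma (Lemma \ref{L:geometric}, proved in the appendix via Tadi\'c's explication of the geometric lemma), and then deduces the proposition exactly by your Steps (2)--(3), i.e., Frobenius reciprocity plus multiplicity one of $\rho^{\times a}\otimes\sigma$ in the Jacquet module forces the socle of $\sigma_a$ to be irreducible.

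There is, however, one concrete gap in your Step (1). You assert that every ``cross term'' of the geometric lemma has its $\GL_a$-factor built from a nontrivial Jacquet module of $\sigma$, so that matching $\rho\cdot 1^{\times a}$ would force $\sigma\hookrightarrow\rho\rtimes\sigma'$. This is not true: in Tadi\'c's parametrization by partitions $k_1+k_2+k_3=a$, the terms with $k_2=0$ and $k_1>0$ produce $\GL_a$-factors that are subquotients of $\rho^{\times k_3}\times({^c\rho}^{\vee})^{\times k_1}$ and do not touch $\sigma$ at all. For such a term to contribute another copy of $\rho^{\times a}$ one would need ${^c\rho}^{\vee}\cong\rho$, and no hypothesis on $\sigma$ can rule this out. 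So the stated hypothesis of the proposition alone does not close the argument; you must also check that ${^c\rho}^{\vee}\ne\rho$ for $\rho=\chi_V|-|^{s_{m,n,t}}$. This does hold --- since ${^c\chi_V^{-1}}=\chi_V$, one has ${^c\rho}^{\vee}=\chi_V|-|^{-s_{m,n,t}}$, which differs from $\rho$ precisely because $s_{m,n,t}<0$ --- and it is exactly condition (a) of Lemma \ref{L:geometric}, verified in the paper's one-line deduction of the proposition. With that extra observation your bookkeeping closes, and the remainder of your argument (Frobenius reciprocity and exactness of the Jacquet functor to rule out two irreducible submodules) coincides with the paper's.
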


The proof of Proposition \ref{P:unique1} relies on the
following key technical lemma, which we state in slightly greater
generality as it may be useful for other purposes. The proof of the lemma is deferred to the appendix.
\vskip 5pt

\begin{lem}  \label{L:geometric}
Let $\rho$ be a supercuspidal representation of $\GL_r(E)$ and
consider the induced representation 
\[
\sigma_{\rho,a} = \rho^{\times a}  \rtimes \sigma
\] 
of $G(W_n)$ where $\sigma$ is an irreducible  representation  of
$G(W_{n-ra})$. Assume that
\begin{itemize}
\item[(a)]  $^c\rho^{\vee}  \ne \rho$;
\item[(b)]  $ \sigma \nsubseteq \rho \rtimes \sigma_0$ for any
  $\sigma_0$.
\end{itemize}
 Then we have the following:
\begin{enumerate}[(i)]
\item One has a natural short exact sequence
\[  
\begin{CD}
0 @>>> T @>>>  R_{\overline{Q}(X_{ra})} \sigma_{\rho, a} @>>>
({^c\rho}^{\vee})^{\times a}  \otimes \sigma @>>> 0 \end{CD} 
\]
and $T$ does not  contain any irreducible subquotient of the form
$({^c\rho}^{\vee})^{\times a} \rtimes \sigma'$ for any $\sigma'$. 
In particular, $R_{\overline{Q}(X_{ra})} \sigma_{\rho, a}$
contains $({^c\rho}^{\vee})^{\times a}  \otimes \sigma$ with multiplicity one, and
does not contain any other subquotient of the form $({^c\rho}^{\vee})^{\times a}  \otimes \sigma'$.
Likewise, $R_{Q(X_{ra})} \sigma_{\rho, a}$
contains $\rho^{\times a}  \otimes \sigma$ with multiplicity one and does not contain any other subquotient of the form
$\rho^{\times a}  \otimes \sigma'$. 

\vskip 5pt

\item  The induced representation $\sigma_{\rho, a}$
has a unique irreducible submodule.
\end{enumerate}
\end{lem}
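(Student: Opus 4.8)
The plan is to prove parts (i) and (ii) together, with (i) being the technical heart and (ii) following formally from it. For part (i), I would compute $R_{\overline{Q}(X_{ra})}\sigma_{\rho,a}$ using the geometric lemma of Bernstein--Zelevinsky (in the form adapted to classical groups, see e.g.\ the version recalled from Hanzer's work, as the acknowledgements suggest). The geometric lemma expresses $R_{\overline{Q}(X_{ra})}\Ind_{Q(X_{ra})}^{G(W_n)}(\rho^{\times a}\otimes\sigma)$ as a sum over double cosets $\overline{Q}(X_{ra})\backslash G(W_n)/Q(X_{ra})$ of induced-then-restricted pieces. The key observation is that among all these double-coset contributions, exactly one --- the ``open'' or ``big-cell'' contribution --- produces the term $({^c\rho}^{\vee})^{\times a}\otimes\sigma$; this uses the conjugate-duality appearing in restriction to the opposite parabolic (which is precisely where the factor $^c\rho^{\vee}$ enters), combined with the fact that $\rho^{\times a}\otimes\sigma$ sits at the top of a filtration. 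The remaining double cosets contribute pieces of $T$ whose cuspidal support, along the $\GL(X_{ra})$ factor, involves twists of $\rho$ and of the cuspidal support of $\sigma$, but \emph{not} of the form $({^c\rho}^{\vee})^{\times a}$ --- this is where hypotheses (a) $^c\rho^{\vee}\ne\rho$ and (b) $\sigma\nsubseteq\rho\rtimes\sigma_0$ are used: (a) rules out the pieces built from copies of $\rho$ masquerading as $^c\rho^{\vee}$, and (b) rules out $^c\rho^{\vee}$ being absorbed into the cuspidal support of $\sigma$ (one also needs a compatible statement for $^c\rho^{\vee}$, obtained by applying the MVW involution or conjugate-duality to (b), noting $({^c\rho}^{\vee})^{c\vee}=\rho$). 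The multiplicity-one assertion for $\rho^{\times a}\otimes\sigma$ inside $R_{Q(X_{ra})}\sigma_{\rho,a}$ is the mirror-image computation with the standard (non-opposite) Jacquet functor, or alternatively follows by transport of structure.

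For part (ii), I would argue as follows. Suppose $\sigma_{\rho,a}$ has two distinct irreducible submodules $\pi_1,\pi_2$; then $\pi_1\oplus\pi_2\hookrightarrow\sigma_{\rho,a}$. By Frobenius reciprocity (standard form), $\Hom_{G(W_n)}(\pi_i,\sigma_{\rho,a})=\Hom_{\GL(X_{ra})\times G(W_{n-ra})}(R_{Q(X_{ra})}\pi_i,\ \rho^{\times a}\otimes\sigma)$, so $\rho^{\times a}\otimes\sigma$ is a quotient of $R_{Q(X_{ra})}\pi_i$ for each $i$. On the other hand, applying $R_{Q(X_{ra})}$ (an exact functor) to $\pi_1\oplus\pi_2\hookrightarrow\sigma_{\rho,a}$ and using part (i), the summand $\rho^{\times a}\otimes\sigma$ of $R_{Q(X_{ra})}\sigma_{\rho,a}$ occurs with multiplicity one; but it must receive the (nonzero) images of the corresponding quotients coming from both $R_{Q(X_{ra})}\pi_1$ and $R_{Q(X_{ra})}\pi_2$. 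A more careful version: let $\pi\hookrightarrow\sigma_{\rho,a}$ be any irreducible submodule; by the multiplicity-one and rigidity statement in (i), $R_{Q(X_{ra})}\pi$ contains $\rho^{\times a}\otimes\sigma$, and this forces $\pi$ to be the unique irreducible submodule, because any two such submodules would force $R_{Q(X_{ra})}\sigma_{\rho,a}$ to contain $\rho^{\times a}\otimes\sigma$ with multiplicity $\ge 2$, contradicting (i). (One should phrase this via: $\Hom_{G(W_n)}(\operatorname{socle}(\sigma_{\rho,a}),\sigma_{\rho,a})\hookrightarrow\Hom(R_{Q(X_{ra})}(\operatorname{socle}),\rho^{\times a}\otimes\sigma)$, and the right side has dimension $\le 1$ by the multiplicity-one part of (i) together with the fact --- from the ``no other subquotient $\rho^{\times a}\otimes\sigma'$'' clause --- that $\operatorname{Hom}$ from the relevant Jacquet module into $\rho^{\times a}\otimes\sigma$ is at most one-dimensional.)

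The main obstacle I anticipate is the bookkeeping in the geometric lemma for classical groups: correctly enumerating the $\overline{Q}\backslash G/Q$ double cosets, identifying which one is ``open'', and --- crucially --- checking that every non-open contribution to $T$ genuinely avoids the cuspidal support pattern $({^c\rho}^{\vee})^{\times a}$ on the $\GL$-factor. This requires a clean description of how conjugate-linear duality intertwines with the Weyl-group combinatorics, and careful use of both (a) and (the MVW-twisted form of) (b) to exclude the borderline cases. This is presumably why the authors defer it to an appendix. Everything else --- induction in stages, exactness of Jacquet functors, Frobenius reciprocity --- is routine once (i) is in hand.
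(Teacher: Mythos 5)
Your plan follows the paper's proof essentially verbatim: the paper computes $R_{\overline{Q}(X_{ra})}\sigma_{\rho,a}$ via Tadi\'c's explicit form of the Bernstein--Zelevinsky geometric lemma, uses hypothesis (a) to exclude the cells contributing a factor $\rho$ and hypothesis (b) to exclude those contributing a conjugate-dual of a Jacquet-module constituent of $\sigma$, transports the multiplicity-one statement to $R_{Q(X_{ra})}$ by the Weyl element conjugating $Q(X_{ra})$ to $\overline{Q}(X_{ra})$, and deduces (ii) exactly as you do from Frobenius reciprocity and exactness of the Jacquet functor. The only discrepancy is cosmetic: the paper identifies the cell producing $({^c\rho}^{\vee})^{\times a}\otimes\sigma$ as the \emph{closed} cell in $Q\backslash G/\overline{Q}$ (whence it appears as the quotient in the exact sequence), not the open one.
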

\vskip 5pt

\begin{proof}[Proof of Proposition \ref{P:unique1}]
We shall apply this lemma with
\[ r=1, \quad \rho  = \chi_V |-|^{s_{m,n,t}}  \quad \text{and}  \quad
\sigma_a = \chi_V | -|^{s_{m,n,t}} \cdot 1^{\times a}   \rtimes
\sigma. \]
Here, condition (a) holds since $s_{m,n,t} <0$, whereas condition (b)
holds by the maximality of $a$. This proves Proposition
\ref{P:unique1}.
\end{proof}

\noindent We shall have another occasion to use Lemma \ref{L:geometric} later on. We also note the following corollary:
\vskip 5pt

\begin{cor}  \label{C:max}
Suppose that $\pi \hookrightarrow \sigma_a$ and $\sigma \nsubseteq \chi_V |-|^{s_{m,n,t}} \rtimes \sigma'$ for any $\sigma'$. 
\vskip 5pt
\begin{enumerate}[(i)]  
\item If $\pi \hookrightarrow  \delta_a :=  \chi_V | -|^{s_{m,n,t}} \cdot 1^{\times a}   \rtimes
\delta$ for some $\delta$, then $\delta \cong \sigma$.
\vskip 5pt
\item Moreover, $a$ is maximal with respect to the property that $\pi
  \hookrightarrow \delta_a$ for some irreducible $\delta$. 
\end{enumerate}
 
\end{cor}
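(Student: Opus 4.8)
The plan is to deduce both statements from Lemma \ref{L:geometric}(i) applied with $r=1$ and $\rho = \chi_V|-|^{s_{m,n,t}}$, exactly as in the proof of Proposition \ref{P:unique1}. Note first that hypothesis (b) of the lemma is precisely the standing assumption $\sigma \nsubseteq \chi_V|-|^{s_{m,n,t}} \rtimes \sigma'$, and hypothesis (a) holds because $s_{m,n,t} < 0$ forces ${}^c\rho^\vee = \chi_V|-|^{-s_{m,n,t}} \ne \rho$ (here one uses \eqref{E:chi}). Since $\pi \hookrightarrow \sigma_a = \rho^{\times a} \rtimes \sigma$, Frobenius reciprocity gives a nonzero map $R_{Q(X_a)}(\pi) \to \rho^{\times a} \otimes \sigma$ — actually, after dualizing and using the MVW involution as in the derivation of \eqref{E:boundary}, it is cleaner to phrase everything in terms of $R_{\overline{Q}(X_a)}$, so that $\pi \hookrightarrow \sigma_a$ yields $({}^c\rho^\vee)^{\times a} \otimes \sigma \hookrightarrow R_{\overline{Q}(X_a)}(\pi)$ (using that $({}^c\rho^\vee)^{\times a}$ is irreducible, as $1^{\times a}$ is, by \cite[Theorem 9.7]{z80}).

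For part (i), suppose also $\pi \hookrightarrow \delta_a = \rho^{\times a} \rtimes \delta$ for some irreducible $\delta$ of $G(W_{n-2a})$. Then similarly $({}^c\rho^\vee)^{\times a} \otimes \delta \hookrightarrow R_{\overline{Q}(X_a)}(\pi)$. On the other hand, $\pi \hookrightarrow \sigma_a$ gives $R_{\overline{Q}(X_a)}(\pi) \hookrightarrow R_{\overline{Q}(X_a)}(\sigma_a)$, so by Lemma \ref{L:geometric}(i) the only subquotient of $R_{\overline{Q}(X_a)}(\pi)$ of the form $({}^c\rho^\vee)^{\times a} \otimes (\text{something})$ is $({}^c\rho^\vee)^{\times a} \otimes \sigma$, occurring with multiplicity one. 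Hence $\delta \cong \sigma$. (One small point to check: that $\sigma \nsubseteq \rho \rtimes \sigma'$ is genuinely equivalent to the hypothesis as stated, i.e. that the MVW/dualizing manipulations carry hypothesis (b) over correctly; this is where \eqref{E:MVW} and \eqref{E:chi} get used, but it is the same bookkeeping already performed for Proposition \ref{P:unique1}.)

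For part (ii), we argue by contradiction: suppose $\pi \hookrightarrow \delta_{a+1} = \rho^{\times (a+1)} \rtimes \delta$ for some irreducible $\delta$ of $G(W_{n-2a-2})$. By induction in stages, $\delta_{a+1} = \rho \rtimes \delta_a'$ where $\delta_a' = \rho^{\times a} \rtimes \delta$, so $\pi \hookrightarrow \rho \rtimes \delta_a'$; pick an irreducible submodule $\delta'$ of $\delta_a'$, so in fact (since $\rho \rtimes(-)$ is exact on the relevant piece, or more carefully: $\pi$ embeds into $\rho \rtimes \delta_a'$ and one can arrange an irreducible $\delta'' \subset \delta_a'$ with $\pi \hookrightarrow \rho \rtimes \delta''$) we get $\pi \hookrightarrow \rho \rtimes \delta''$ with $\delta'' \hookrightarrow \rho^{\times a} \rtimes \delta$, i.e. $\pi \hookrightarrow \rho^{\times(a+1)} \rtimes \delta$ refined to $\pi \hookrightarrow \rho \rtimes \delta''$. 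Applying part (i) with $a$ replaced by $a+1$ is not directly available, so instead: from $\pi \hookrightarrow \sigma_a = \rho^{\times a} \rtimes \sigma$ and $\pi \hookrightarrow \rho^{\times(a+1)}\rtimes\delta = \rho^{\times a} \rtimes (\rho \rtimes \delta)$, part (i) — whose proof above only used that the second factor is \emph{some} representation, not that it is irreducible, once we pass to a suitable irreducible submodule of $\rho \rtimes \delta$ — forces $\sigma \hookrightarrow \rho \rtimes \delta$, contradicting the hypothesis $\sigma \nsubseteq \chi_V|-|^{s_{m,n,t}} \rtimes \sigma'$.

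The main obstacle I anticipate is the last paragraph: making the "$a+1$" comparison rigorous requires care about exactness of $\rho \rtimes (-)$ and about whether one may replace $\rho \rtimes \delta$ by an irreducible submodule without losing the embedding $\pi \hookrightarrow \rho^{\times a} \rtimes (\text{that submodule})$ — equivalently, one wants to know that an embedding $\pi \hookrightarrow \rho^{\times a} \rtimes \Xi$ for $\Xi = \rho\rtimes\delta$ descends to $\pi \hookrightarrow \rho^{\times a}\rtimes \Xi_0$ for some irreducible $\Xi_0 \subset \Xi$. This should follow because $\operatorname{Hom}_{G(W_n)}(\pi, \rho^{\times a}\rtimes \Xi) = \operatorname{Hom}_{\GL \times G}(R_{Q(X_a)}(\pi), \rho^{\times a}\otimes\Xi)$ and a nonzero such map has image with nonzero, hence (by Jacquet-module finiteness) irreducible-containing, projection to $\Xi$; chasing this back gives the desired $\Xi_0$. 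Alternatively — and this is probably the cleanest route — one invokes Lemma \ref{L:geometric}(i) for the index $a$ together with the observation that $R_{\overline{Q}(X_a)}(\rho^{\times(a+1)}\rtimes\delta)$ has \emph{every} subquotient of the form $({}^c\rho^\vee)^{\times a}\otimes(-)$ with second factor a subquotient of $\rho\rtimes\delta$ (by the geometric lemma / Kudla-type computation for $\GL$), so the multiplicity-one statement in Lemma \ref{L:geometric}(i) applied inside $R_{\overline{Q}(X_a)}(\pi)$ again pins $\sigma$ down as a subquotient of $\rho\rtimes\delta$, the contradiction. I would write it this second way to avoid delicate submodule-chasing.
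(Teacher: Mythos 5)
Your part (i) is essentially the paper's argument, just transported to the opposite parabolic: both versions rest on the ``key fact'' from Lemma \ref{L:geometric}(i) that $R_{Q(X_a)}(\pi)\subseteq R_{Q(X_a)}(\sigma_a)$ contains $\rho^{\times a}\otimes\sigma$ (with $\rho=\chi_V|-|^{s_{m,n,t}}$) with multiplicity one and no other subquotient of the form $\rho^{\times a}\otimes(-)$; that part is fine. The gap is in part (ii), in the route you declare cleanest. Counting subquotients of $R_{\overline{Q}(X_a)}(\rho^{\times(a+1)}\rtimes\delta)$ can only ever yield that $\sigma$ is a \emph{subquotient} of $\rho\rtimes\delta$. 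That is not a contradiction: the standing hypothesis $\sigma\nsubseteq\chi_V|-|^{s_{m,n,t}}\rtimes\sigma'$ is a \emph{submodule} condition, and since $^c\rho^{\vee}\ne\rho$ a subquotient of $\rho\rtimes\delta$ need not embed into any $\rho\rtimes(-)$; for instance an irreducible quotient $B$ of $\rho\rtimes\delta$ satisfies $B\hookrightarrow{^c\rho^{\vee}}\rtimes\delta$ (dualize and apply MVW) and may have no quotient of the form $\rho\otimes(-)$ in its Jacquet module at all. So that route does not close.

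Your first route is the correct one, and the step you flag as the obstacle is precisely where the paper does the work --- but the fix is not ``pass to an irreducible submodule of $\rho\rtimes\delta$ first''; it is the key fact itself. From $\pi\hookrightarrow\rho^{\times a}\rtimes(\rho\rtimes\delta)$, Frobenius reciprocity gives a nonzero map $R_{Q(X_a)}(\pi)\to\rho^{\times a}\otimes(\rho\rtimes\delta)$. Its image is a submodule of the target, hence (as $\rho^{\times a}$ is irreducible) of the form $\rho^{\times a}\otimes\Xi_1$ with $\Xi_1\subseteq\rho\rtimes\delta$; it is also a quotient of $R_{Q(X_a)}(\pi)$, and every irreducible subquotient of it has the form $\rho^{\times a}\otimes(-)$, so by the key fact the image must be exactly $\rho^{\times a}\otimes\sigma$. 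Thus $\Xi_1\cong\sigma$ and $\sigma\hookrightarrow\rho\rtimes\delta$ as a genuine submodule, which is the desired contradiction. This single multiplicity-one identification simultaneously resolves the irreducibility worry and upgrades ``subquotient'' to ``submodule''; without it, neither of your two proposed completions of (ii) goes through.
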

\vskip 5pt

\begin{proof}
 By the exactness of the Jacquet functor, $R_{Q(X_a)}(\pi)$ is a submodule of $R_{Q(X_a)}(\sigma_a)$. By Lemma \ref{L:geometric}, it follows that $R_{Q(X_a)}(\pi)$ contains 
$ \chi_V | -|^{s_{m,n,t}} \cdot  1^{\times a}   \otimes \sigma$ with multiplicity one, and does not contain any other subquotient of the form $ \chi_V | -|^{s_{m,n,t}} \cdot
  1^{\times a}   \otimes \sigma'$ with $\sigma' \ne \sigma$. This key fact will imply both (i) and (ii).
  \vskip 5pt
  
 \noindent (i)  If $\pi \hookrightarrow \delta_a$, then $R_{Q(X_a)}(\pi)$ contains $ \chi_V | -|^{s_{m,n,t}} \cdot
  1^{\times a}   \otimes \delta$ as a quotient. By the key fact observed above, it follows that $\delta \cong \sigma$.
 
  \vskip 5pt
  
  \noindent (ii)  Suppose for the sake of contradiction that 
  \[  \pi \hookrightarrow  \chi_V | -|^{s_{m,n,t}} \cdot  1^{\times (a+1)}   \rtimes \sigma'  \]
     for some irreducible $\sigma'$. Then by induction in stages, one has
     \[  \pi \hookrightarrow  \chi_V | -|^{s_{m,n,t}} \cdot  1^{\times a}   \rtimes \delta  \quad \text{ with $\delta =   \chi_V | -|^{s_{m,n,t}} \rtimes \sigma'$}. \]
By the Frobenius reciprocity, one has a nonzero equivariant map
\[  R_{Q(X_a)}(\pi) \longrightarrow  \chi_V | -|^{s_{m,n,t}} \cdot  1^{\times a}  \otimes \delta. \]
By the key fact observed above, the image of this nonzero map must be isomorphic to $ \chi_V | -|^{s_{m,n,t}} \cdot  1^{\times a}   \otimes \sigma$.
Hence,  
\[  \sigma \hookrightarrow \delta = \chi_V | -|^{s_{m,n,t}} \rtimes \sigma'  \]
which is a contradiction to the hypothesis of the corollary.
\end{proof}

\vskip 15pt

\subsection{\bf A key computation.}
We are now ready to launch into a computation needed to complete the proof of Theorem \ref{T:main} for the representations on the boundary. 
The following is the key proposition:

\begin{prop}\label{P:unique2}
Assume $0 \ne \Pi \subset \theta(\pi)$ and $\Pi$ is irreducible.
\vskip 5pt

\noindent (i)  If
\[  \pi \hookrightarrow    \chi_V | -|^{s_{m,n,t}} \cdot 1^{\times a}   \rtimes \sigma  \]
with $a$ maximal (and for some $\sigma$), then
\[    \Pi \hookrightarrow   \chi_W |-|^{s_{m,n,t}} \cdot 1^{\times a} \rtimes \Sigma  \]
for some $\Sigma$ and  where $a$ is also maximal for $\Pi$. 

\vskip 5pt
\noindent (ii) Moreover, whenever $\Pi$ is presented as a submodule as above, one has
\[   
0  \ne \Hom_{G_n\times H_m}( \omega_{W_n, V_m},  \pi \otimes \Pi)
\hookrightarrow \Hom_{G_{n-2a}\times H_{m-2a}}
(\omega_{W_{n-2a}, V_{m-2a}} , \sigma \otimes\Sigma),
\]
 so  that $\Sigma\subseteq\theta_{W_{n-2a}, V_{m-2a}}(\sigma)$.
\end{prop}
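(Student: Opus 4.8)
The plan is to argue both parts simultaneously using Kudla's filtration (Lemma \ref{L:kudla}) applied to the Jacquet module $R_{Q(X_a)}(\omega_{W_n,V_m})$, where $X_a$ is an $a$-dimensional isotropic subspace of $W_n$. First, since $\Pi \subset \theta(\pi)$ and $\Pi$ is irreducible, we have a nonzero map $\omega_{W_n,V_m} \twoheadrightarrow \pi \otimes \Pi$, equivalently $0 \ne \Hom_{G_n\times H_m}(\omega_{W_n,V_m},\pi\otimes\Pi)$. By the given embedding $\pi \hookrightarrow \chi_V|-|^{s_{m,n,t}}\cdot 1^{\times a}\rtimes\sigma$ and Frobenius reciprocity (standard), we get a nonzero map $R_{Q(X_a)}(\omega_{W_n,V_m}) \to (\chi_V|-|^{s_{m,n,t}}\cdot 1^{\times a}\otimes\sigma)\otimes\Pi$. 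The strategy is then to show this map must factor through the bottom piece $J^a$ of Kudla's filtration, because the terms $J^k$ with $k<a$ do not admit such a quotient: their $\GL(X_a)$-constituents are of the form $\chi_V|\det_{X_{a-k}}|^{\lambda_{a-k}}$ induced up, and matching against $\chi_V|-|^{s_{m,n,t}}\cdot 1^{\times a}$ on the $\GL(X_a)$-side forces, via an exponent/cuspidal-support comparison on the segment $\chi_V|-|^{s_{m,n,t}}$, a contradiction with the maximality of $a$ (this is exactly where Lemma \ref{L:geometric}(i), applied with $\rho = \chi_V|-|^{s_{m,n,t}}$, comes in: it controls which $\rho^{\times a}\otimes(\cdot)$ appear in the relevant Jacquet modules, using that $^c\rho^\vee \ne \rho$ since $s_{m,n,t}<0$).

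Once we know the map factors through $J^a$, recall from Lemma \ref{L:kudla} that
\[
J^a \cong \Ind^{\GL(X_a)\times G(W_{n-2a})\times H(V_m)}_{\GL(X_a)\times G(W_{n-2a})\times P(Y_a)}\left(C^\infty_c(\Isom_{E,c}(X_a,Y_a))\otimes \omega_{W_{n-2a},V_{m-2a}}\right).
\]
So a nonzero $\GL(X_a)\times G(W_{n-2a})\times H(V_m)$-map $J^a \to (\chi_V|-|^{s_{m,n,t}}\cdot 1^{\times a}\otimes\sigma)\otimes\Pi$ is, by Frobenius reciprocity on the $H(V_m)$-factor, a nonzero $\GL(X_a)\times G(W_{n-2a})\times P(Y_a)$-map from $C^\infty_c(\Isom_{E,c}(X_a,Y_a))\otimes\omega_{W_{n-2a},V_{m-2a}}$ to the same target with $\Pi$ replaced by $R_{P(Y_a)}(\Pi)$ (equivalently by $R_{\overline{P}(Y_a)}(\Pi)$ after dualizing). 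The $C^\infty_c(\Isom_{E,c}(X_a,Y_a))$ factor, as a $\GL(X_a)\times\GL(Y_a)$-module, transports the character $\chi_V$ on $\GL(X_a)$ to the character $\chi_W$ on $\GL(Y_a)$ (with a conjugate-linear twist, using \eqref{E:chi}), and is concentrated on a single orbit which is free; extracting its $\chi_V|-|^{s_{m,n,t}}\cdot 1^{\times a}$-isotypic part on the $\GL(X_a)$-side therefore produces exactly a $\chi_W|-|^{s_{m,n,t}}\cdot 1^{\times a}$-datum on the $\GL(Y_a)$-side, together with a surviving $\Hom_{G_{n-2a}\times H_{m-2a}}(\omega_{W_{n-2a},V_{m-2a}},\sigma\otimes\Sigma)$ for $\Sigma$ an irreducible constituent of $R_{\overline{P}(Y_a)}(\Pi)$ matching the datum. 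This gives the inclusion of Hom-spaces in part (ii) and, reading it backward through Bernstein's Frobenius reciprocity, the embedding $\Pi \hookrightarrow \chi_W|-|^{s_{m,n,t}}\cdot 1^{\times a}\rtimes\Sigma$ and $\Sigma \subseteq \theta_{W_{n-2a},V_{m-2a}}(\sigma)$ in part (i).

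It remains to check the maximality of $a$ for $\Pi$, and to verify that the argument does not depend on the particular presentation of $\Pi$ as a submodule (the "whenever" clause in (ii)). For maximality: if $\Pi \hookrightarrow \chi_W|-|^{s_{m,n,t}}\cdot 1^{\times(a+1)}\rtimes\Sigma'$, then by symmetry of the whole setup in $(W,V)$ — applying the same factorization argument in the other direction, i.e. running Kudla's filtration for $R_{Q(Y_{a+1})}(\omega_{W_n,V_m})$ — we would get $\pi \hookrightarrow \chi_V|-|^{s_{m,n,t}}\cdot 1^{\times a}\rtimes(\chi_V|-|^{s_{m,n,t}}\rtimes\sigma'')$ for some $\sigma''$, contradicting the maximality of $a$ for $\pi$ by Corollary \ref{C:max}(ii). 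The main obstacle I anticipate is the bookkeeping in the first step: showing cleanly that a nonzero map out of $R_{Q(X_a)}(\omega_{W_n,V_m})$ with the prescribed $\GL(X_a)$-behaviour cannot meet the subquotients $J^k$ for $k<a$. This requires carefully comparing the cuspidal support / exponents carried by $\chi_V|\det_{X_{a-k}}|^{\lambda_{a-k}}$-type constituents against the fixed segment $\chi_V|-|^{s_{m,n,t}}\cdot 1^{\times a}$, and invoking Lemma \ref{L:geometric}(i) together with the maximality of $a$ to rule out the "mixed" possibilities — this is precisely the technical heart where Minguez's idea is used, and where one must be most careful.
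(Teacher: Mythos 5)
Your overall strategy matches the paper's: reduce to the bottom piece $J^a$ of Kudla's filtration, transfer the $\GL(X_a)$-datum to a $\GL(Y_a)$-datum via $C^\infty_c(\Isom_{E,c}(X_a,Y_a))$, and prove maximality of $a$ for $\Pi$ by running the filtration in the reverse direction. But there are two concrete gaps. First, you invoke Lemma \ref{L:geometric}(i) in the wrong place. It plays no role in killing the pieces $J^k$ with $k<a$: those are ruled out because the $\GL(X_{a-k})$-factor of $J^k$ is the character $\chi_V|\det_{X_{a-k}}|^{\lambda_{a-k}}$, which (since $1^{\times a}$ is generic) can pair nontrivially with $R_{\overline{Q}(X_{a-k},X_a)}(\chi_V|-|^{s_{m,n,t}}\cdot 1^{\times a})$ only when $a-k=1$, and then the exponent inequality $\lambda_1=s_{m,n}+\tfrac12>s_{m,n}-t+\tfrac12=s_{m,n,t}$ finishes the job; neither the maximality of $a$ nor Lemma \ref{L:geometric} enters here. (Your alternative "cuspidal support comparison" would also work, but not for the reason you give.) In the reverse direction the analogous step is even easier because $\lambda_{b-k}=-s_{m,n}+\tfrac{b-k}{2}>0>s_{m,n,t}$; your appeal to "symmetry" hides this sign asymmetry, which is exactly why the standing assumption $s_{m,n}\le 0$ matters.

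Second, and more seriously, part (ii) is not proved. You correctly flag that the "whenever" clause must be checked, but then address only maximality and never return to it; and even for the particular $\Sigma$ produced by your forward computation, you merely assert a "surviving" Hom space rather than an injection of Hom spaces. The paper's proof of (ii) starts from the \emph{given} presentation $\Pi\hookrightarrow\chi_W|-|^{s_{m,n,t}}\cdot 1^{\times a}\rtimes\Sigma$, runs the filtration on the $V$-side to land in
$\Hom\bigl(\chi_V|-|^{-s_{m,n,t}}\cdot 1^{\times a}\otimes\omega_{W_{n-2a},V_{m-2a}},\,R_{\overline{Q}(X_a)}(\sigma_a)\otimes\Sigma\bigr)$,
and only then uses the short exact sequence $0\to T\to R_{\overline{Q}(X_a)}(\sigma_a)\to(\chi_V|-|^{-s_{m,n,t}}\cdot 1^{\times a})\otimes\sigma\to 0$ of Lemma \ref{L:geometric}(i): the vanishing of the Hom space against $T\otimes\Sigma$ (because $T$ has no subquotient of the form $(\chi_V|-|^{-s_{m,n,t}}\cdot 1^{\times a})\otimes\sigma'$) is what yields the injection into $\Hom(\omega_{W_{n-2a},V_{m-2a}},\sigma\otimes\Sigma)$. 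This is the actual, and indispensable, use of Lemma \ref{L:geometric}(i) in the proposition; without it the dimension bound needed for the induction in Section \ref{S:assem} does not follow. You should either reproduce this step or observe that, once $a$ is known to be maximal for $\Pi$, Corollary \ref{C:max}(i) pins down $\Sigma$ up to isomorphism, so that it suffices to treat one presentation -- but some such argument must be supplied.
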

\vskip 5pt

 \vskip 5pt

\begin{proof}
(i) Since $0\neq \Pi\subseteq\theta(\pi)$, we have
\begin{align*}
 0 &\ne \Hom_{G_n\times H_m}( \omega_{W,V}, \pi  \otimes \Pi)\\
 &\hookrightarrow \Hom_{G_n\times H_m}(\omega_{W,V}, \sigma_a \otimes \Pi)\\
  &= \Hom_{\GL(X_a)\times G_{n-2a}\times H_m}
( R_{Q(X_a)}(\omega_{W,V}), \chi_V|-|^{s_{m,n,t}} 1^{\times a}\otimes \sigma \otimes \Pi),
  \end{align*}
where we used the Frobenius reciprocity for the last step. Now
the Jacquet module $R_{Q(X_a)}(\omega_{W,V}) $ of the Weil
representation is computed as in the lemma, which
 implies that there is a natural restriction map
\begin{align*}
 &\Hom_{\GL(X_a)\times G_{n-2a}\times H_m}
( R_{Q(X_a)}(\omega_{W,V}), \chi_V|-|^{s_{m,n,t}} \cdot 1^{\times
   a} \otimes \sigma \otimes \Pi)\\ 
&\qquad\longrightarrow \Hom_{\GL(X_a)\times G_{n-2a}\times H_m} 
( J^a, \chi_V|-|^{s_{m,n,t}} \cdot 1^{\times a} \otimes \sigma \otimes\Pi).
\end{align*}
 We claim that this map is injective.  To see this, it suffices to show that for all $0 \leq k < a$,
\[  \Hom_{\GL(X_a)\times G_{n-2a}\times H_m}
( J^k,   \chi_V|-|^{s_{m,n,t}} \cdot 1^{\times a} \otimes \sigma \otimes \Pi) = 0. \]
By the above lemma, this Hom space is equal to
\begin{align*}   
&\Hom_{M(X_{a-k}, X_a)\times G_{n-2a}\times H_m}\\
&\qquad\Big(\Ind^{H(V_m)}_{P(Y_k)}\chi_V |{\det}_{X_{a-k}}|^{\lambda_{a-k}}
\otimes C^{\infty}_c(\Isom_{E,c}(X_k,Y_k))\otimes
\omega_{W_{n-2a}, V_{m-2k}},\\
&\hspace{2in}R_{\overline{Q}(X_{a-k}, X_a)}(
\chi_V|-|^{s_{m,n,t}} \cdot  1^{\times a}) \otimes \sigma \otimes
\Pi\Big),
\end{align*}
where $M(X_{a-k}, X_a)$ is the Levi factor of the parabolic subgroup of $\GL(X_a)$
stabilizing $X_{a-k}$. Because $1^{\times a}$ is generic, the second
representation in this Hom
space has a nonzero Whittaker functional when viewed as a
representation of $\GL(X_{a-k})$ and hence the first one must also
have a non-zero Whittaker functional, which is possible only when
$a-k=1$. Therefore, if this Hom space is nonzero, we must have
$a-k=1$. But in that case, one has:
\[  
\lambda_1 =   s_{m,n} + \frac{1}{2}  >  s_{m,n} - t + \frac{1}{2} =
s_{m,n,t}, 
\]
so that the above Hom space is zero even when $a-k=1$.
\vskip 5pt

Therefore we have $J^a\neq 0$ and 
\begin{align*}
 0 \ne &\Hom_{\GL(X_a)\times G_{n-2a}\times H_m}
( J^a,   \chi_V|-|^{s_{m,n,t}} \cdot 1^{\times a} \otimes\sigma \otimes \Pi)\\
=& \Hom_{H_m} 
( \chi_W |-|^{-s_{m,n,t}} \cdot 1^{\times  a} \rtimes \Theta_{W_{n-2a}, V_{m-2a}}(\sigma),  \Pi).
\end{align*}
Dualizing and applying MVW along with \eqref{E:MVW} and \eqref{E:chi}, this shows that 
\[  \Pi  \hookrightarrow   \chi_W |-|^{s_{m,n,t}} \cdot 1^{\times a}
\rtimes (\Theta_{W_{n-2a}, V_{m-2a}}(\sigma)^\vee)^{MVW}, \]
and hence
\begin{equation}  \label{E:Pi}
  \Pi  \hookrightarrow   \chi_W |-|^{s_{m,n,t}} \cdot 1^{\times a} \rtimes \Sigma \end{equation}
for some irreducible representation $\Sigma$ of $H(V_{m-2a})$ which is
a subquotient of the representation $ (\Theta_{W_{n-2a}, V_{m-2a}}(\sigma)^\vee)^{MVW}$
and hence of $\Theta_{W_{n-2a}, V_{m-2a}}(\sigma)$.
\vskip 5pt

To prove (i), it remains to show that in \eqref{E:Pi},  the integer $a$ is maximal for $\Pi$. 
Let $b \geq a$ be maximal such that
\[  \Pi  \hookrightarrow  \chi_W |-|^{s_{m,n,t}} \cdot 1^{\times b} \rtimes \Sigma_0  \] 
for some irreducible representation $\Sigma_0$ of $H(V_{m-2b})$. Then
we have
\begin{align}  \label{E:kudla}
 0 &\ne  \Hom_{G_n\times H_m}( \omega_{W_n, V_m},  \pi \otimes \Pi)   \\
 &\hookrightarrow \Hom_{G_n\times H_m}( \omega_{W_n, V_m},  \pi
 \otimes (\chi_W |-|^{s_{m,n,t}} \cdot 1^{\times b} \rtimes \Sigma_0
 ) ) \notag \\
 & = \Hom_{G_{n}\times\GL(Y_b)\times H_{m-2b}}(R_{P(Y_b)}(\omega_{W_n,
   V_m}),  \pi \otimes  \chi_W
 |-|^{s_{m,n,t}} \cdot 1^{\times b} \otimes \Sigma_0 ). \notag 
\end{align}
We can compute the Jacquet module $R_{P(Y_b)}(\omega_{W_n, V_m})$ by
using Lemma \ref{L:kudla} with the roles of $H(V_m)$ and $G(W_n)$
switched. But for this, it should be noted that the exponent  $\lambda_{b-k}$ (for $k < b$)
 in Lemma \ref{L:kudla} satisfies:
 \begin{equation}\label{E:lambda}
   \lambda_{b-k} =  -s_{m,n} + \frac{b-k}{2} > 0  >  s_{m,n,t}. 
\end{equation}
Keeping this in mind, the last Hom space in \eqref{E:kudla} can be
computed as
\begin{align}  \label{E:kudla}
 & \Hom_{G_{n}\times\GL(Y_b)\times H_{m-2b}}(R_{P(Y_b)}(\omega_{W_n,
   V_m}),  \pi \otimes  \chi_W
 |-|^{s_{m,n,t}} \cdot 1^{\times b} \otimes \Sigma_0 ) \notag \\
 \hookrightarrow&   \Hom_{G_{n}\times\GL(Y_b)\times H_{m-2b}}( J^b,
 \pi \otimes  \chi_W |-|^{s_{m,n,t}}  \cdot 1^{\times b} \otimes \Sigma_0) \notag \\
 =& \Hom_{\GL(X_b)\times G_{n-2b}\times H_{m-2b}}(
 \chi_V|-|^{-s_{m,n,t}} \cdot 1^{\times b} \otimes
 \omega_{W_{n-2b}, V_{m-2b}}, R_{\overline{Q}(X_b)}(\pi) \otimes
 \Sigma_0)\notag \\
=&\Hom_{\GL(X_b)\times G_{n-2b}}(
 \chi_V|-|^{-s_{m,n,t}} \cdot 1^{\times b} \otimes
 \Theta_{W_{n-2b}, V_{m-2b}}(\Sigma_0), R_{\overline{Q}(X_b)}(\pi))\notag\\
=&\Hom_{G_n}(
 \chi_V|-|^{-s_{m,n,t}} \cdot 1^{\times b} \rtimes
 \Theta_{W_{n-2b}, V_{m-2b}}(\Sigma_0), \pi )\notag
 \end{align}
where to obtain the second injection, we used the genericity of
$1^{\times b}$ and
\eqref{E:lambda} as before. Then again by dualizing and applying MVW,
we have
 \[  \pi  \hookrightarrow  \chi_V |-|^{s_{m,n,t}} \cdot 1^{\times b}  \rtimes \sigma_0 \]
 for some $\sigma_0$ which is a subquotient of $\Theta_{W_{n-2b},
   V_{m-2b}}(\Sigma_0)$. By the maximality of $a$, we conclude that
 $b\leq a$ and hence $b =a$.   This completes the proof of (i).

 \vskip 10pt
 
 \noindent (ii)  Suppose that $\Pi$ is given as in \eqref{E:Pi} with $a$ maximal and some $\Sigma$.
 Now that we know that $b=a$ in the proof of (i), we revisit  the computations starting from (\ref{E:kudla}):
\begin{align*}
 0 &\ne \Hom_{G_n\times H_m}\big( \omega_{W_n, V_m},  \pi \otimes  \Pi\big)\\
&\hookrightarrow  \Hom_{\GL(X_b)\times G_{n-2b}\times H_{m-2b}}(
 \chi_V |-|^{-s_{m,n,t}} \cdot 1^{\times b} \otimes
 \omega_{W_{n-2b}, V_{m-2b}}, R_{\overline{Q}(X_b)}(\pi) \otimes \Sigma) \\
&\hookrightarrow \Hom_{\GL(X_a)\times G_{n-2a}\times  H_{m-2a}}
\big(\chi_V |-|^{-s_{m,n,t}} \cdot 1^{\times a} \otimes \omega_{W_{n-2a}, V_{m-2a}},\\
&\hspace{2.5in} R_{\overline{Q}(X_a)} (\chi_V |-|^{s_{m,n,t}}\cdot 1^{\times a} \rtimes \sigma)\otimes
\Sigma\big).
\end{align*}
To show the proposition, it suffices to show that the last Hom space embeds into
\[
 \Hom_{G_{n-2a}\times H_{m-2a}}(\omega_{W_{n-2a}, V_{m-2a}} , \sigma \otimes \Sigma).
\]
To show this inclusion, we shall make use of Lemma \ref{L:geometric}. In Lemma \ref{L:geometric}(i),
 set $\sigma_{\rho, a}=\sigma_a$, namely set
$\rho=\chi_V |-|^{s_{m,n,t}}$. Tensoring the
 short exact sequence with $\Sigma$ and then applying the functor
 \[  
\Hom_{\GL(X_a)\times G_{n-2a}\times  H_{m-2a}}
(\chi_V \cdot |-|^{-s_{m,n,t}} \cdot 1^{\times a} \otimes\omega_{W_{n-2a}, V_{m-2a}}, -), 
\]
one sees that the desired inclusion follows from the assertion:
 \[
 \Hom_{\GL(X_a)\times G_{n-2a}\times  H_{m-2a}}
(\chi_V \cdot |-|^{-s_{m,n,t}} \cdot 1^{\times a} \otimes
 \omega_{W_{n-2a}, V_{m-2a}}, T \otimes \Sigma) = 0.
\]
But this follows from Lemma \ref{L:geometric}(ii) which asserts that $T$ does not
 contain any irreducible subquotient of the form
\[   \chi_W |-|^{-s_{m,n,t}} \cdot 1^{\times a} \otimes   \Sigma' \quad \text{for any $\Sigma'$.} \]
This completes the proof of (ii).
\end{proof}

 \vskip 15pt

\section{\bf Proof of Theorem \ref{T:main}}  \label{S:assem}

We can now assemble the results of the last two sections and complete
our proof of Theorem \ref{T:main}. As is mentioned in Section
\ref{S:outline},  we may assume that $m \leq n+ \epsilon_0$, or equivalently that $s_{m,n} \leq 0$ (see
\eqref{E:assumption}  and \eqref{E:s_m_n}),  because otherwise we may switch the roles of $G(W)$
and $H(V)$. Further, we shall argue by induction on $\dim W$. Thus, 
by induction hypothesis, we assume that Theorem \ref{T:main} is known for
dual pairs $G(W')  \times H(V')$ with $\dim V'  \leq \dim W'
+\epsilon_0 < n+\epsilon_0$.  \vskip 5pt

 \subsection{\bf Irreducibility.}
 We first show the irreducibility of $\theta(\pi)$.  If $\pi$ does not occur on the boundary of $I(-s_{m,n})$, this follows from Theorem \ref{T:non_boundary}.
 Thus, we assume $\pi$ occurs on the boundary, so that
\[  \pi \hookrightarrow \chi_V |{\det}_{\GL(X_t)}|^{s_{m,n} - \frac{t}{2}} \rtimes \sigma \]
for some $t >0$ and some $\sigma$. 
  Let us write
\[  \pi \hookrightarrow    \chi_V | -|^{s_{m,n,t}} \cdot 1^{\times a}   \rtimes \sigma  \]
with $a$ maximal. Let
$\Pi \subseteq\theta(\pi)$ be an irreducible submodule.  But by Proposition \ref{P:unique2}, 
\[   
0  \ne \Hom_{G_n\times H_m}( \omega_{W_n, V_m},  \pi \otimes \Pi)
\hookrightarrow \Hom_{G_{n-2a}\times H_{m-2a}}
(\omega_{W_{n-2a}, V_{m-2a}} , \sigma \otimes\Sigma),
\]
where $\Sigma \subset \theta_{W_{n-2a}, V_{m-2a}}(\sigma)$ is an irreducible representation such that
\[    \Pi \hookrightarrow   \chi_W |-|^{s_{m,n,t}} \cdot 1^{\times a}
\rtimes \Sigma  \]
where $a$ is also maximal for $\Pi$. By induction hypothesis, we have
$\Sigma=\theta_{W_{n-2a}, V_{m-2a}}(\sigma)$, and hence
\[    \Pi \hookrightarrow   \chi_W |-|^{s_{m,n,t}} \cdot 1^{\times a}
\rtimes \theta_{W_{n-2a}, V_{m-2a}}(\sigma). \]
 By Proposition \ref{P:unique1}, this induced representation has a
unique submodule. This shows that $\theta(\pi)$ is an isotypic representation. Further,
since
\[
\dim \Hom_{G_{n-2a}\times H_{m-2a}}
(\omega_{W_{n-2a}, V_{m-2a}} , \sigma \otimes\Sigma)=1,
\]
by the induction hypothesis, we conclude by Proposition \ref{P:unique2}(ii)
 that
\[
\dim \Hom_{G_n\times H_m}( \omega_{W_n, V_m},  \pi \otimes \Pi)=1.
\]
This shows that $\Pi$ occurs with multiplicity one in $\theta(\pi)$, so that  $\theta(\pi)$ is irreducible.
\vskip 10pt

\subsection{\bf Disjointness.}

It remains to prove that if $\theta(\pi)  = \theta(\pi') = \Pi \ne 0$, then $\pi = \pi'$. 
By Proposition \ref{P:non_boundary}, this holds unless both $\pi$ and
$\pi'$ occur on the boundary for the same $t$, namely there exists $t>0$ such that 
\[  \pi \hookrightarrow \chi_V |{\det}_{\GL(X_t)}|^{s_{m,n} - \frac{t}{2}} \rtimes \tau\]
and
\[  \pi' \hookrightarrow \chi_V |{\det}_{\GL(X_t)}|^{s_{m,n} - \frac{t}{2}} \rtimes \tau'. \]
This means that we may write
\[  \pi \hookrightarrow \chi_V |-|^{s_{m,n,t}} \cdot 1^{\times a} \rtimes \sigma \]
and 
\[  \pi' \hookrightarrow \chi_V |-|^{s_{m,n,t}} \cdot 1^{\times a'} \rtimes \sigma' \]
with $a$ and $a'$ maximal (and for some $\sigma$ and $\sigma'$).  But
then by Proposition \ref{P:unique2}(i) one
must have $a=a'$, where $a$ is maximal such that
\[  \Pi  \hookrightarrow  \chi_W |-|^{s_{m,n,t}} \cdot 1^{\times a}
\rtimes \Sigma \quad \text{for some $\Sigma$.} \]
Moreover, with Proposition \ref{P:unique2}(ii) and the induction hypothesis, we have
\[  \theta_{W_{n-2a}, V_{m-2a}}(\sigma)  =\Sigma  =  \theta_{W_{n-2a}, V_{m-2a}}(\sigma'),\]
so  that $\sigma  \cong \sigma'$. We then deduce by
Proposition \ref{P:unique1} that $\pi \cong \pi'$ is the unique
irreducible submodule of
$ \sigma_a = \chi_V |-|^{s_{m,n,t}} \cdot 1^{\times a} \rtimes \sigma$. This
completes the proof of Theorem \ref{T:main}.

 \vskip 15pt

\section{\bf Quaternionic Dual Pairs }

In this final section, we consider the case of the quaternionic
dual pairs. As mentioned in the introduction, due to the lack of the
MVW involution, we are not able to prove the Howe duality conjecture
in full generality. The best we can prove is Theorem
\ref{T:main_quaternion}, where we consider only Hermitian
representations (i.e.\ those $\pi$ such that $\overline{\pi} \cong
\pi^{\vee}$, where $\overline{\pi}$ is the complex conjugate of
$\pi$).  The idea of the proof is essentially the same as for the
non-quaternionic
case. But in place of the MVW involution, we use the involution
\[
\pi\mapsto \overline{\pi}.
\]
In what follows, we will outline how to modify the proof. 
\vskip 5pt

\subsection{\bf Setup.}
Let us briefly recall the setup in the quaternionic case, with
emphasis on the  aspects which are different from before.
\vskip 5pt

Let $B$ be the unique quaternion division algebra over $F$. For $\epsilon = \pm$,
let $W=W_n$ be a rank $n$ $B$-module equipped with a $-\epsilon$-Hermitian form and  $V_m$  a rank $m$ $B$-module equipped with an $\epsilon$-Hermitian form. 
Then the product $G(W_n) \times H(V_m)$ of isometry groups is a dual pair,  with a  Weil representation $\omega_{W,V}$ associated to a pair of splitting characters $(\chi_V, \chi_W)$. 
In this case, the characters $\chi_V$ and $\chi_W$ are simply (possibly trivial) quadratic characters determined by the discriminants of the corresponding spaces $V$ and $W$.    
\vskip 5pt

For an isotropic subspace $X_t$ of rank $t$ over $B$, let $Q(X_t)$ be
the stabilizer of $X_t$, which is a maximal parabolic subgroup of
$G(W)$ with the Levi factor $L(X_t)  \cong \GL(X_t) \times
G(W_{n-2t})$, where $\GL(X_t)  \cong \GL_t(B)$.  We shall denote by
${\det}_{\GL(X_t)}:  \GL(X_t)  \longrightarrow F^{\times}$ the reduced
norm map.  Likewise, a maximal parabolic subgroup $P(Y_t)$ of $H(V_m)$
is the stabilizer of an isotropic subspace $Y_t$ of $V_m$.  As before,
we have Tadi\'{c}'s notation for parabolic induction.  
\vskip 5pt

We set
\[  s_{m,n}  = m-n + \frac{\epsilon}{2}. \]
In the quaternionic case, we say that an irreducible representation $\pi$ of $G(W_n)$ lies on the boundary of $I(-s_{m,n})$ if there exists $0< t \leq q_W $ such that
\[  \pi \hookrightarrow \chi_V |{\det}_{\GL(X_t)}|^{s_{m,n} - t}  \rtimes \sigma \]
for some irreducible representation $\sigma$ of $G(W_{n-2t})$.  If
$\overline{\pi} \cong \pi^{\vee}$, i.e.\ if $\pi$ is Hermitian, then
by dualizing and complex conjugating, we see that this is 
equivalent to:
\[  \Hom_{\GL(X_t)}(\chi_V | {\det}_{\GL(X_t)}|^{-s_{m,n} +t}, R_{\overline{Q}(X_t)}(\pi))  \ne  0. \]

\vskip 5pt

\subsection{\bf Non-boundary case.}
We can now begin the proof of Theorem \ref{T:main_quaternion}, starting with the case when the Hermitian representation $\pi$ does not lie on the boundary. 
For the sake of proving Theorem \ref{T:main_quaternion}, there is no loss of generality in assuming that $m < n - \frac{\epsilon}{2}$, so that $s_{m,n}  < 0$.  
\vskip 5pt

Now one can verify that all the arguments in Section
\ref{S:non_boundary_case} continue to work for a Hermitian $\pi$, with the following modifications:
\vskip 5pt

\begin{enumerate}[$\bullet$]
\item  The first  place where the MVW
involution is used in Section \ref{S:non_boundary_case} is the see-saw identity
\eqref{E:see-saw}. But one can see from the proof of the see-saw
identity in \cite[\S 6.1]{gi} that one has 
\begin{equation*}
\Hom_{G(W) \times G(W)} ( \Theta_{V, W + W^-}(\chi_W), \pi' \otimes
\pi^{\vee}) = \Hom_{H(V)^\Delta}( \Theta(\pi') \otimes
\overline{\Theta(\pi)}, \CC). 
\end{equation*}
Then \eqref{E:see-saw2} can be written as
\begin{align}
 \Hom_{H(V)^\Delta}( \Theta(\pi') \otimes
\overline{\Theta(\pi)}, \CC) 
&\supseteq  \Hom_{H(V)^\Delta}( \theta_{her}(\pi') \otimes
\theta_{her}(\pi)^{\vee}, \CC)  \notag \\
&= \Hom_{H(V)}( \theta_{her}(\pi'),  \theta_{her}(\pi)).  \notag
\end{align}
 
\vskip 5pt

\item There is an evident analogue of Lemma \ref{L:key} in the
  quaternionic case. The statement is as given in Lemma \ref{L:key},
  except that the terms $|\det_{\GL(X_t)}|^{s+ \frac{t}{2}}$
should be replaced by $|\det_{\GL(X_t)}|^{s+ t}$.
\end{enumerate}
With these provisions, the rest of the argument
in Section \ref{S:non_boundary_case} does not use the MVW involution,
and hence apply to the
quaternionic case without any modification. One also has the analogue of Proposition  \ref{P:non_boundary}, with the exponent $s_{m,n}  - \frac{t}{2}$ replaced by $s_{m,n}  -t$.

\vskip 5pt

\subsection{\bf Boundary case.}
Suppose that $\pi$ is an irreducible Hermitian representation of
$G(W)$ which lies on the boundary of $I(-s_{m,n})$.
Then, for some $t > 0$, one has
\[  \pi \hookrightarrow    \chi_V | -|^{s_{m,n,t}} \cdot 1^{\times a}   \rtimes \sigma  \]
with $a$ maximal (and for some $\sigma$) and
\[  s_{m,n, t}  =  s_{m,n}  -2t +1  < 0. \]
Now  one has an analogue of Lemma \ref{L:geometric}, based on  the
explicit Geometric Lemma in the quaternionic case (which is written
down in the thesis of M. Hanzer \cite[Theorem 2.2.5]{Ha}).    Using
this, one deduces the analogue of Proposition \ref{P:unique1} by the
same argument. However, it is essential to note the following lemma:

\vskip 5pt

\begin{lem}  \label{L:her}
Suppose that
\[  \pi  \hookrightarrow      \chi_V | -|^{s_{m,n,t}} \cdot 1^{\times a}   \rtimes \sigma  \]
with $a$ maximal and for some $\sigma$. If $\pi$ is Hermitian, so is $\sigma$.
\end{lem}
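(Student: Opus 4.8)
\textbf{Proof proposal for Lemma \ref{L:her}.}

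The plan is to use the same strategy that underlies the main argument: pass to a Jacquet module, exploit the maximality of $a$ to pin down a unique subquotient, and then import the Hermitian hypothesis through the involution $\pi \mapsto \overline{\pi}$. First I would invoke the analogue of Lemma \ref{L:geometric} in the quaternionic setting (based on Hanzer's geometric lemma \cite[Theorem 2.2.5]{Ha}), applied with $\rho = \chi_V|-|^{s_{m,n,t}}$; since $s_{m,n,t} < 0$ and $\chi_V$ is quadratic, hypothesis (a) holds ($\rho^\vee \ne \rho$), and hypothesis (b) holds by the maximality of $a$. This tells us that $R_{Q(X_a)}(\sigma_a)$ contains $\chi_V|-|^{s_{m,n,t}} \cdot 1^{\times a} \otimes \sigma$ with multiplicity one and contains no other subquotient of the shape $\chi_V|-|^{s_{m,n,t}} \cdot 1^{\times a} \otimes \sigma'$ with $\sigma' \ne \sigma$. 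By exactness of the Jacquet functor applied to $\pi \hookrightarrow \sigma_a$, the same multiplicity-one statement holds for $R_{Q(X_a)}(\pi)$: it has $\chi_V|-|^{s_{m,n,t}} \cdot 1^{\times a} \otimes \sigma$ as its unique subquotient of that form. (This is precisely the ``key fact'' from the proof of Corollary \ref{C:max}.)

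Next I would apply the involution $\pi \mapsto \overline{\pi}$, which, unlike the contragredient, is covariant and commutes with parabolic induction up to conjugating the inducing data. Since $\pi$ is Hermitian, $\overline{\pi} \cong \pi^\vee$. Dualizing the embedding $\pi \hookrightarrow \chi_V|-|^{s_{m,n,t}} \cdot 1^{\times a} \rtimes \sigma$ gives a surjection $\chi_V^{-1}|-|^{-s_{m,n,t}} \cdot 1^{\times a} \rtimes \sigma^\vee \twoheadrightarrow \pi^\vee$, hence $\chi_V|-|^{-s_{m,n,t}} \cdot 1^{\times a} \rtimes \sigma^\vee \twoheadrightarrow \pi^\vee$ using that $\chi_V$ is quadratic and $1^{\times a}$ is self-dual. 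Complex-conjugating and using $\overline{\pi} \cong \pi^\vee$ together with $\overline{\chi_V} = \chi_V$ (real-valued quadratic character) and $\overline{1^{\times a}} \cong 1^{\times a}$, we obtain $\chi_V|-|^{-s_{m,n,t}} \cdot 1^{\times a} \rtimes \overline{\sigma^\vee} \twoheadrightarrow \pi^\vee$; dualizing once more yields $\pi \hookrightarrow \chi_V|-|^{s_{m,n,t}} \cdot 1^{\times a} \rtimes \overline{\sigma}{}^\vee$. (One must track the exponent signs carefully; the point is that negating twice returns $s_{m,n,t}$.)

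Now I would apply Corollary \ref{C:max}(i) (its quaternionic analogue): since $\pi \hookrightarrow \chi_V|-|^{s_{m,n,t}} \cdot 1^{\times a} \rtimes \delta$ both for $\delta = \sigma$ and for $\delta = \overline{\sigma}{}^\vee$, and since $a$ is maximal and $\sigma \nsubseteq \chi_V|-|^{s_{m,n,t}} \rtimes \sigma'$ for any $\sigma'$, the uniqueness statement forces $\overline{\sigma}{}^\vee \cong \sigma$, i.e.\ $\sigma$ is Hermitian. Alternatively, and perhaps more cleanly, one can argue directly from the multiplicity-one fact for $R_{Q(X_a)}(\pi)$: the embedding $\pi \hookrightarrow \chi_V|-|^{s_{m,n,t}} \cdot 1^{\times a} \rtimes \overline{\sigma}{}^\vee$ produces a nonzero map $R_{Q(X_a)}(\pi) \to \chi_V|-|^{s_{m,n,t}} \cdot 1^{\times a} \otimes \overline{\sigma}{}^\vee$, whose image must be the unique subquotient $\chi_V|-|^{s_{m,n,t}} \cdot 1^{\times a} \otimes \sigma$, whence $\overline{\sigma}{}^\vee \cong \sigma$.

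The main obstacle I anticipate is purely bookkeeping rather than conceptual: making sure that conjugation $\pi \mapsto \overline{\pi}$ genuinely interchanges parabolic induction data in the way needed (it sends $\rho \rtimes \sigma$ to $\overline{\rho} \rtimes \overline{\sigma}$, with no $c$-twist since $B$ is over $F$ and the reduced norm is $F$-valued, so $\overline{\chi_V} = \chi_V$), and that the exponent in $s_{m,n,t}$ behaves correctly under dualization. One also needs to know that the quaternionic analogue of Corollary \ref{C:max} and Lemma \ref{L:geometric} genuinely hold, which the paper asserts via Hanzer's geometric lemma; granting that, the rest is a formal diagram chase identical to the split/non-split case with $\pi \mapsto \overline{\pi}$ in the role of $\pi \mapsto \pi^{MVW}$.
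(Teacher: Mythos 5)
There is a genuine gap in the middle of your argument: the chain ``dualize, conjugate, dualize again'' is circular and does not produce the embedding $\pi \hookrightarrow \chi_V|-|^{s_{m,n,t}}\cdot 1^{\times a}\rtimes \overline{\sigma}{}^{\vee}$ on which both of your closing arguments rely. Track the steps: from $\pi \hookrightarrow \chi_V|-|^{s_{m,n,t}}\cdot 1^{\times a}\rtimes\sigma$ you correctly get the surjection $\chi_V|-|^{-s_{m,n,t}}\cdot 1^{\times a}\rtimes\sigma^{\vee}\twoheadrightarrow\pi^{\vee}$; complex conjugation then turns the target into $\overline{\pi^{\vee}}\cong\pi$ (not $\pi^{\vee}$ as you wrote), giving $\chi_V|-|^{-s_{m,n,t}}\cdot 1^{\times a}\rtimes\overline{\sigma}{}^{\vee}\twoheadrightarrow\pi$; and dualizing once more yields $\pi^{\vee}\hookrightarrow \chi_V|-|^{s_{m,n,t}}\cdot 1^{\times a}\rtimes\overline{\sigma}$ --- the wrong representation on the left and $\overline{\sigma}$ rather than $\overline{\sigma}{}^{\vee}$ on the right. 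If you try to repair the left-hand side by conjugating once more, you land back at $\pi\hookrightarrow\chi_V|-|^{s_{m,n,t}}\cdot 1^{\times a}\rtimes\sigma$, which is where you started. So neither your appeal to Corollary \ref{C:max}(i) nor your ``alternative'' argument via $R_{Q(X_a)}(\pi)$ has its hypothesis established; each would in fact presuppose $\overline{\sigma}{}^{\vee}\cong\sigma$, which is the conclusion.

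The paper's proof stops after one dualization and one conjugation, at the surjection
\[
\bigl(\chi_V|-|^{-s_{m,n,t}}\cdot 1^{\times a}\bigr)\rtimes\overline{\sigma}{}^{\vee}\twoheadrightarrow\pi ,
\]
and then applies Bernstein's Frobenius reciprocity to obtain an embedding of $\bigl(\chi_V|-|^{-s_{m,n,t}}\cdot 1^{\times a}\bigr)\otimes\overline{\sigma}{}^{\vee}$ into the \emph{opposite} Jacquet module $R_{\overline{Q}(X_a)}(\pi)$. It is the part of Lemma \ref{L:geometric}(i) concerning $R_{\overline{Q}(X_{ra})}(\sigma_{\rho,a})$ --- whose unique subquotient of the form $(\rho^{\vee})^{\times a}\otimes\sigma_0$ is $(\rho^{\vee})^{\times a}\otimes\sigma$ --- that then forces $\overline{\sigma}{}^{\vee}\cong\sigma$. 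Your instinct to use the multiplicity-one consequence of the geometric lemma is right, but you must apply it on the $\overline{Q}$ side to the quotient map, not on the $Q$ side to a submodule embedding you do not have.
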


\begin{proof}
To see
this, starting from $\pi \hookrightarrow  \rho^{\times a} \rtimes
\sigma$ (with $a$ maximal and $\rho$ a real-valued 1-dimensional
character for which $\rho^{\vee} \ne \rho$), one deduces (by dualizing
and complex-conjugating) that
\[  (\overline{\rho}^{\vee})^{\times a} \rtimes
\overline{\sigma}^{\vee} \twoheadrightarrow \overline{\pi}^{\vee}
\cong \pi. \]
Now note that for the case at hand, the supercuspidal representation
$\rho$ satisfies $\overline{\rho}  = \rho$; indeed, $\rho$ is a
real-valued character for our application. Thus, Bernstein's
Frobenius reciprocity implies that 
\[  (\rho^{\vee})^{\times a} \otimes \overline{\sigma}^{\vee}
\hookrightarrow R_{\overline{Q}(X_{ra})}( \pi). \]
However, the analogue of Lemma \ref{L:geometric}(i) says that the only
irreducible subquotient of 
$R_{\overline{Q}(X_{ra})}( \pi)$ of the form $(\rho^{\vee})^{\times a} \otimes \sigma_0$ is 
 $(\rho^{\vee})^{\times a} \otimes \sigma$. Hence, we see that
 $\overline{\sigma}^{\vee}  \cong \sigma$.
\end{proof}
\vskip 5pt

Then one has the following analogue of Proposition \ref{P:unique2}:
\vskip 5pt
\begin{prop}\label{P:unique2-quat}
Assume that $\pi$ is an irreducible Hermitian representation of $G(W)$
and  $0 \ne \Pi\subset \theta_{her}(\pi)$.
\vskip 5pt

\noindent (i)  If
\[  \pi \hookrightarrow    \chi_V | -|^{s_{m,n,t}} \cdot 1^{\times a}   \rtimes \sigma  \]
with $a$ maximal (and for some $\sigma$, necessarily Hermitian by Lemma \ref{L:her}), then
\[    \Pi \hookrightarrow   \chi_W |-|^{s_{m,n,t}} \cdot 1^{\times a} \rtimes \Sigma  \]
for some $\Sigma$ (necessarily Hermitian by Lemma \ref{L:her}) and  where $a$ is also maximal for $\Pi$. 

\vskip 5pt
\noindent (ii) Moreover, whenever $\Pi$ is presented as a submodule as above, one has
\[   
0  \ne \Hom_{G_n\times H_m}( \omega_{W_n, V_m},  \pi \otimes \Pi)
\hookrightarrow \Hom_{G_{n-2a}\times H_{m-2a}}
(\omega_{W_{n-2a}, V_{m-2a}} , \sigma \otimes\Sigma),
\]
 so  that $\Sigma\subseteq \theta_{W_{n-2a}, V_{m-2a}, her}(\sigma)$.
\end{prop}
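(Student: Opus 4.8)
The plan is to mimic the proof of Proposition \ref{P:unique2} line by line, with the MVW involution systematically replaced by the complex-conjugation involution $\pi \mapsto \overline{\pi}$, taking care at each point where irreducibility/self-duality of the relevant representations is needed. First I would establish part (i). Since $0 \ne \Pi \subset \theta_{her}(\pi)$, there is a nonzero map $\omega_{W,V} \to \pi \otimes \Pi$, and using $\pi \hookrightarrow \sigma_a = \chi_V|-|^{s_{m,n,t}} \cdot 1^{\times a} \rtimes \sigma$ together with Frobenius reciprocity, this becomes a nonzero map out of $R_{Q(X_a)}(\omega_{W,V})$. I would then invoke the quaternionic Kudla filtration (the Geometric Lemma in Hanzer's thesis \cite[Theorem 2.2.5]{Ha}) and show, exactly as in the original, that the contributions $J^k$ for $k < a$ vanish: the genericity of $1^{\times a}$ forces $a - k = 1$, and then the exponent inequality $\lambda_1 = s_{m,n} + \tfrac12 > s_{m,n,t}$ (suitably reinterpreted with the quaternionic normalization $s_{m,n,t} = s_{m,n} - 2t + 1$, so that $\lambda_1 - s_{m,n,t} = 2t - \tfrac12 > 0$) kills even that piece. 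This leaves $J^a \ne 0$ and the nonvanishing of $\Hom_{H_m}(\chi_W|-|^{-s_{m,n,t}} \cdot 1^{\times a} \rtimes \Theta_{W_{n-2a},V_{m-2a}}(\sigma), \Pi)$. Dualizing and conjugating (in place of dualizing and applying MVW), and using that $\chi_W$ is a real-valued quadratic character so $\overline{\chi_W}^{-1} = \chi_W$, gives $\Pi \hookrightarrow \chi_W|-|^{s_{m,n,t}} \cdot 1^{\times a} \rtimes \Sigma$ for some irreducible $\Sigma$ which is a subquotient of $\overline{\Theta_{W_{n-2a},V_{m-2a}}(\sigma)^\vee}$, hence of $\Theta_{W_{n-2a},V_{m-2a}}(\sigma)$.

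Next I would verify the maximality of $a$ for $\Pi$: let $b \ge a$ be maximal with $\Pi \hookrightarrow \chi_W|-|^{s_{m,n,t}} \cdot 1^{\times b} \rtimes \Sigma_0$, run the same Jacquet-module computation with the roles of $G$ and $H$ swapped (using $\lambda_{b-k} = -s_{m,n} + \text{(positive)} > 0 > s_{m,n,t}$, again adapted to the quaternionic exponents), and conclude $\pi \hookrightarrow \chi_V|-|^{s_{m,n,t}} \cdot 1^{\times b} \rtimes \sigma_0$ for some $\sigma_0$; the maximality of $a$ for $\pi$ then gives $b = a$. Note that throughout this argument $\sigma$ and $\Sigma$ are automatically Hermitian by Lemma \ref{L:her}, so the decoration $\theta_{her}$ in the conclusion makes sense and $\Sigma \subseteq \theta_{W_{n-2a},V_{m-2a},her}(\sigma)$. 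For part (ii), knowing $b = a$, I revisit the chain of embeddings from the proof of (i) to reduce the nonvanishing Hom space to $\Hom_{\GL(X_a)\times G_{n-2a}\times H_{m-2a}}(\chi_V|-|^{-s_{m,n,t}} \cdot 1^{\times a} \otimes \omega_{W_{n-2a},V_{m-2a}}, R_{\overline{Q}(X_a)}(\chi_V|-|^{s_{m,n,t}} \cdot 1^{\times a} \rtimes \sigma) \otimes \Sigma)$, and then apply the quaternionic analogue of Lemma \ref{L:geometric}(i): the short exact sequence $0 \to T \to R_{\overline{Q}(X_a)}(\sigma_a) \to (\overline{\rho}^\vee)^{\times a} \otimes \sigma \to 0$, with $\rho = \chi_V|-|^{s_{m,n,t}}$ satisfying $\rho^\vee \ne \rho$ (since $s_{m,n,t} < 0$) and $\overline{\rho} = \rho$, whose term $T$ contains no subquotient of the form $\chi_W|-|^{-s_{m,n,t}} \cdot 1^{\times a} \otimes \Sigma'$. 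Tensoring with $\Sigma$ and applying the Hom functor then shows the $T$-contribution vanishes, so the Hom space embeds into $\Hom_{G_{n-2a}\times H_{m-2a}}(\omega_{W_{n-2a},V_{m-2a}}, \sigma \otimes \Sigma)$, which forces $\Sigma \subseteq \theta_{W_{n-2a},V_{m-2a},her}(\sigma)$.

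The main obstacle I anticipate is not any single hard estimate but rather the bookkeeping needed to confirm that the quaternionic Geometric Lemma of \cite[Theorem 2.2.5]{Ha} yields \emph{verbatim} the same structural conclusions as Lemma \ref{L:geometric} — in particular that hypothesis (a), $^c\rho^\vee \ne \rho$, which in the non-quaternionic setting came from the strict negativity of the exponent, has the correct analogue here (for a quaternion algebra $E = B$ there is no Galois conjugation, so the relevant condition is simply $\rho^\vee \ne \rho$, guaranteed by $s_{m,n,t} < 0$), and that the reduced-norm normalization of $\det_{\GL(X_t)}$ produces the shifted exponents $s + t$ rather than $s + \tfrac{t}{2}$ consistently throughout. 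Once one checks that the quaternionic Kudla filtration and Geometric Lemma have the shape asserted, every $\Hom$-space manipulation above goes through formally, with $\overline{(\cdot)}$ replacing MVW and the real-valuedness of the quadratic characters $\chi_V, \chi_W$ playing the role of \eqref{E:chi}. I would therefore present the proof as "the same computation as Proposition \ref{P:unique2}, mutatis mutandis," spelling out only the two points where the involution changes (the dualize-and-conjugate steps) and the exponent arithmetic, and citing Lemma \ref{L:her} for the Hermitian-ness of $\sigma$ and $\Sigma$.
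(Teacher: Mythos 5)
Your proposal is correct and follows essentially the same route as the paper, which proves Proposition \ref{P:unique2-quat} simply by running the argument of Proposition \ref{P:unique2} with the MVW involution replaced by $\Pi \mapsto \overline{\Pi}$, the quaternionic Kudla filtration (cited to \cite{mvw}; Hanzer's thesis \cite{Ha} supplies the geometric lemma, not the filtration), and Lemma \ref{L:her} for the Hermitian-ness of $\sigma$ and $\Sigma$. One small normalization correction: in the quaternionic case $\lambda_{a-k} = s_{m,n} + a - k$, so $\lambda_1 = s_{m,n}+1$ and $\lambda_1 - s_{m,n,t} = 2t$ rather than $2t - \tfrac12$; the required inequality $\lambda_1 > s_{m,n,t}$ of course still holds.
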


Proposition \ref{P:unique2-quat} is proved by the same argument as that for
Proposition \ref{P:unique2}, using the analogue of Lemma \ref{L:kudla} (see \cite[Chap. 3, Sect. IV, Thm. 5, Pg. 70]{mvw}).  We only take note that in the statement  of 
Lemma \ref{L:kudla}, the quantity $\lambda_{a-k}$ should be equal to $s_{m,n}  + a-k$ in the quaternionic case.
\vskip 10pt

With the above provisions, the rest of the proof goes through for the quaternionic case, which completes the proof of
 Theorem \ref{T:main_quaternion}.
\vskip 15pt

\section*{\bf Appendix: Proof of Lemma \ref{L:geometric}}
The goal of this appendix is to prove the technical Lemma
\ref{L:geometric}. We restate the lemma here for the convenience of
the reader.
\vskip 5pt

\begin{lem_geometric}
Let $\rho$ be a supercuspidal representation of $\GL_r(E)$ and
consider the induced representation 
\[
\sigma_{\rho,a} = \rho^{\times a}  \rtimes \sigma
\] 
of $G(W_n)$ where $\sigma$ is an irreducible  representation  of
$G(W_{n-ra})$. Assume that
\begin{itemize}
\item[(a)]  $^c\rho^{\vee}  \ne \rho$;
\item[(b)]  $ \sigma \nsubseteq \rho \rtimes \sigma_0$ for any
  $\sigma_0$.
\end{itemize}
 Then we have the following:
\begin{enumerate}[(i)]
\item One has a natural short exact sequence
\[  
\begin{CD}
0 @>>> T @>>>  R_{\overline{Q}(X_{ra})} \sigma_{\rho, a} @>>>
({^c\rho}^{\vee})^{\times a}  \otimes \sigma @>>> 0 \end{CD} 
\]
and $T$ does not  contain any irreducible subquotient of the form
$({^c\rho}^{\vee})^{\times a} \rtimes \sigma'$ for any $\sigma'$. 
In particular, $R_{\overline{Q}(X_{ra})} \sigma_{\rho, a}$
contains $({^c\rho}^{\vee})^{\times a}  \otimes \sigma$ with multiplicity one, and
does not contain any other subquotient of the form $({^c\rho}^{\vee})^{\times a}  \otimes \sigma'$.
Likewise, $R_{Q(X_{ra})} \sigma_{\rho, a}$
contains $\rho^{\times a}  \otimes \sigma$ with multiplicity one and does not contain any other subquotient of the form
$\rho^{\times a}  \otimes \sigma'$. 

\vskip 5pt

\item  The induced representation $\sigma_{\rho, a}$
has a unique irreducible submodule.
\end{enumerate}
\end{lem_geometric}

\begin{proof}
 We shall use an explication of the Geometric Lemma of
 Bernstein-Zelevinsky due to Tadi\'c \cite[Lemmas 5.1 and
 6.3]{ta}. (See \cite{hm} for the metaplectic group.) Tadi\'c's
 results imply that any irreducible subquotient $\delta \otimes
 \sigma'$ of $ R_{\overline{Q}(X_{ra})} \sigma_{\rho, a}$  is obtained in the following way.  
\vskip 5pt

For any partition $k_1 + k_2 + k_3=ra$, write the semisimplification
of the normalized Jacquet module of $\rho^{\times a}$ to the Levi
subgroup $ \GL_{k_1} \times \GL_{k_2} \times \GL_{k_3}$ as a sum of
$\delta_1 \otimes \delta_2 \otimes \delta_3$.   Similarly, write the
semisimplification of the normalized Jacquet module of $\sigma$ to the
Levi subgroup $\GL_{k_2} \times G(W_{n-2ra-2k_2})$ as a sum of
$\delta_4 \otimes \sigma_5$.  Then 
$\delta$ is a subquotient of  $\delta_3 \times ^c\delta^{\vee}_1 \times
^c\delta_4^{\vee}$ whereas $\sigma'$ is a subquotient of $ \delta_2 \otimes
\sigma_5$.
\vskip 5pt

For the case at hand,  since $\rho$ is supercuspidal, we can assume the partition of
$ra$ is of the form $rk_1+rk_2+rk_3=ra$, and the (semisimplified)
normalized Jacquet module of $\rho^{\times a}$ is the isotypic sum of
$\rho^{\times k_1} \otimes \rho^{\times k_2} \otimes \rho^{\times
  k_3}$. Hence we see that for any irreducible subquotient $\delta
\otimes  \sigma'$ of $ R_{\overline{Q}(X_{ra})}\sigma_{\rho, a}$, $\delta$ is
a subquotient of   $\rho^{\times k_3} \times ({^c\rho}^{\times
  k_1})^{\vee}  \times ^c\delta^{\vee}_4$. 
\vskip 5pt

Now the irreducible subquotients of $T$ correspond to
those partitions with $k_2>0$ or $k_3>0$. (Note that the case $k_2 =
k_3  =0$ corresponds to the closed cell in $Q\backslash
G\slash\overline{Q}$, which gives the third term in the short exact
sequence.) The conditions (a) and
(b) then imply that $\delta \ne (^c \rho^{\vee})^{\times a}$. This
proves the statements about $T$ in (i).  Now $Q(X_{ra})$ and
$\overline{Q}(X_{ra})$ are conjugate in $G(W)$ by an element $w$ which
normalizes the Levi subgroup $L(X_{ra}) = \GL(X_{ra}) \times
G(W_{n-2ra})$, acting as  the  identity on $G(W_{n-2ra})$ and 
via $g \mapsto {^c({^t}g^{-1})}$ on $\GL(X_{ra})$. Then one has
\[
^wR_{\overline{Q}(X_{ra})}(\sigma_{\rho, a})=R_{Q(X_{ra})}(\sigma_{\rho, a})
\]
where the LHS is the representation of the Levi $L(X_{ra})$ obtained
by twisting $R_{\overline{Q}(X_{ra})}(\sigma_{\rho, a})$ by $w$.
Hence, one deduces that 
$R_{Q(X_{ra})}(\sigma_{\rho, a})$ contains $\rho^{\times a} \otimes \sigma$ with multiplicity one. 
\vskip 5pt

Finally, for (ii), let  $\pi\subseteq\sigma_{\rho, a}$ be any irreducible
submodule. Then the Frobenius reciprocity implies that
the semisimplification of $R_{Q(X_{ra})}(\pi)$ contains $\rho^{\times a}\otimes\sigma$. 
Thus, if $\sigma_{\rho, a}$ contains more than one irreducible submodule, the
exactness of the Jacquet functor implies that  
$R_{{Q}(X_{ra})}(\sigma_{\rho, a})$ contains $\rho^{\times a}\otimes\sigma$ with
multiplicity $\geq 2$, which contradicts (i).
 \end{proof}

 \vskip 10pt

\bibliographystyle{amsalpha}

\end{document}